\newtheorem{thm}{Theorem}[section]
\newtheorem{prop}[thm]{Proposition}
\newtheorem{lem}[thm]{Lemma}
\theoremstyle{definition}
\newtheorem{example}[thm]{Example}
\newtheorem{definition}[thm]{Definition}
\newtheorem*{ack}{Acknowledgements}
\newcommand{\Z}{\mathbb Z}
\newcommand{\Comment}[1]{}
\DeclareMathOperator{\facets}{Facets}
\DeclareMathOperator{\N}{\mathcal{N}}
\DeclareMathOperator{\F}{\mathcal{F}}
\begin{document}

\title{A criterion for a monomial ideal to have a linear resolution in characteristic $2$}

\author{ E. Connon\thanks{Research supported by a Killam scholarship.} \qquad \qquad S. Faridi\thanks{Research supported by NSERC.} }

\maketitle

\begin{center} \it \small
Department of Mathematics and Statistics, Dalhousie University, Canada
\end{center}

\begin{abstract}
In this paper we give a necessary and sufficient combinatorial condition for a monomial ideal to have a linear resolution over fields of characteristic $2$.  We also give a new proof of Fr\"oberg's theorem over fields of characteristic $2$.
\end{abstract}

{\bf Keywords:} \ linear resolution, monomial ideal, chordal graph, simplicial complex, simplicial homology, Stanley-Reisner complex, edge ideal, facet ideal, chorded complex, chordal hypergraph, Fr\"oberg's theorem\\


\section{Introduction}

Recently there has been interest in finding a characterization of square-free monomial ideals with linear resolutions in terms of the combinatorics of their associated simplicial complexes or hypergraphs.  See, for example, \cite{Em10}, \cite{HaVT08}, \cite{MNYZ12}, \cite{MYZ12}, and \cite{Wood11}.  This exploration was motivated by a theorem of Fr\"oberg from \cite{Fr90} in which he gives the following combinatorial classification of the square-free monomial ideals generated in degree two which have linear resolutions.

\begin{thm}[Fr\"oberg \cite{Fr90}] \label{thm:Frob_thm}
The edge ideal of a graph $G$ has a linear resolution if and only if the complement of $G$ is chordal.
\end{thm}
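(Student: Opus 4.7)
The plan is to translate the algebraic condition (linear resolution) into a topological one via Hochster's formula, and then show that the relevant topological condition matches chordality of the complement. Let $\Delta = \Delta(\bar{G})$ denote the clique complex of $\bar{G}$, whose Stanley--Reisner ideal is precisely $I(G)$. By Hochster's formula, $\beta_{i,j}(I(G)) = \sum_{|W|=j}\dim_k \tilde{H}_{j-i-2}(\Delta|_W;k)$, where $\Delta|_W$ denotes the induced subcomplex on $W$. Since $I(G)$ is generated in degree $2$, it has a linear resolution if and only if $\beta_{i,j}(I(G)) = 0$ for all $j \neq i+2$, which by Hochster's formula translates into the condition that $\tilde{H}_\ell(\Delta|_W;k) = 0$ for every $W \subseteq V(G)$ and every $\ell \geq 1$.

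For the direction ``$\bar{G}$ chordal $\Rightarrow$ linear resolution,'' I would use the fact that induced subgraphs of chordal graphs are chordal, so it suffices to prove that the clique complex of any chordal graph $H$ is acyclic (in fact, contractible). Applying a perfect elimination ordering $v_1,\ldots,v_n$ of $H$, the link of $v_1$ in $\Delta(H)$ is a simplex (hence contractible), so the star of $v_1$ and its intersection with the deletion are both contractible; a standard Mayer--Vietoris or gluing argument then shows $\Delta(H) \simeq \Delta(H \setminus v_1)$, and induction finishes the job. This argument is characteristic-free, so in particular works over characteristic $2$.

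For the converse ``linear resolution $\Rightarrow$ $\bar{G}$ chordal,'' I would argue by contraposition: if $\bar{G}$ is not chordal, then it contains an induced cycle $C$ of length $n \geq 4$. The induced subcomplex $\Delta|_{V(C)}$ equals $C$ itself (viewed as a $1$-dimensional simplicial complex), since no triples of vertices of $C$ form a triangle in $\bar{G}$. This complex is homotopy equivalent to $S^1$, so $\tilde{H}_1(\Delta|_{V(C)};k) = k \neq 0$, producing a nonlinear Betti number via Hochster.

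The main technical obstacle is the contractibility step in the chordal case: one must verify that the link of a simplicial vertex (in the chordal sense) coincides with the link in the clique complex and is indeed a simplex, and carefully justify the homotopy equivalence under vertex deletion. Everything else is bookkeeping. No characteristic-$2$ subtleties actually arise, since the cycle has torsion-free integral $H_1$, so the proof goes through uniformly; this explains why the paper can present Fr\"oberg's theorem as a corollary of the characteristic-$2$ machinery developed in the sequel.
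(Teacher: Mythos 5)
Your argument is correct and would prove the theorem (indeed over every field), but it is essentially Fr\"oberg's original proof rather than the one this paper gives. You reduce via Hochster's formula to showing $\tilde{H}_\ell(\Delta(\bar{G})_W;k)=0$ for $\ell\geq 1$, and then dismantle the clique complex along a perfect elimination ordering: the star of a simplicial vertex is a cone, its link is a full simplex, and Mayer--Vietoris transfers the homology to the deletion. The paper explicitly describes this Mayer--Vietoris dismantling as Fr\"oberg's method and sets out to replace it (in characteristic $2$) with a combinatorial argument: it shows that $G$ chordal implies $\Delta_1(G)$ is \emph{chorded} (Lemma \ref{thm:delta_chorded}, via Theorem \ref{thm:skeletons_n_chorded}), so that every $d$-dimensional cycle in the clique complex acquires a chord set and hence is a $d$-boundary (Proposition \ref{prop:chorded_dhom}), which kills the higher homology needed for Theorem \ref{thm:Frob_hom_char}. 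What your approach buys is brevity, characteristic independence, and the full strength of the classical statement; what the paper's approach buys is an explicit combinatorial witness (a chord set) for why each potential homology class bounds, which is the mechanism that generalizes to ideals generated in higher degree in Theorems \ref{thm:criterion} and \ref{thm:criterion2}. Your converse direction (an induced $n$-cycle, $n\geq 4$, in $\bar{G}$ survives as an induced $S^1$ in the clique complex) is the standard one and is fine.

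One small inaccuracy to repair: the clique complex of a chordal graph need not be acyclic or contractible, since a chordal graph can be disconnected and then $\tilde{H}_0\neq 0$. This does not hurt you, because the linearity condition only requires $\tilde{H}_\ell=0$ for $\ell\geq 1$; but you should either state the contractibility claim for each connected component, or phrase the induction as showing $\tilde{H}_\ell(\Delta(H))\cong\tilde{H}_\ell(\Delta(H\setminus v_1))$ for all $\ell\geq 1$ rather than claiming contractibility of the whole complex.
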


This characterization has inspired the introduction of several different definitions of a ``chordal'' hypergraph with the goal of achieving a generalization of Fr\"oberg's theorem to higher-dimensions.  Emtander \cite{Em10} and Woodroofe \cite{Wood11} use their respective definitions of a ``chordal'' hypergraph to give a sufficient condition for a square-free monomial ideal to have a linear resolution over all fields.  In \cite{ConFar12}, the authors introduce the notion of a {\bf $d$-chorded} simplicial complex and use it to give a necessary combinatorial condition for an ideal to have a linear resolution over all fields.

Obtaining a complete generalization of Fr\"oberg's theorem to higher dimensions is made difficult by the fact that there exist square-free monomial ideals which have linear resolutions over some fields and not others.  In particular the existence of a linear resolution depends on the characteristic of the field.  The Stanley-Reisner ideal of the triangulation of the real projective plane is a typical example and has a linear resolution only over fields of characteristic not equal to $2$.  Such examples tell us that when an ideal is generated in degrees higher than two it is not always the combinatorics of the associated simplicial complex that determines the existence of a linear resolution.  In this paper we concentrate on fields of characteristic $2$ because in this case we have a more direct relationship between the combinatorics of a complex and its simplicial homology (see \cite{Con13}) which is of primary interest when determining the existence of a linear resolution.

The condition we give in \cite{ConFar12} is not sufficient to ensure linear resolution. In this paper we are able to characterize the obstructions to the converse over fields of characteristic $2$ by demonstrating that all counter-examples share a specific combinatorial property.  In Section \ref{sec:criterion_char2} we are able to give the following necessary and sufficient condition for an ideal to have a linear resolution over fields of characteristic $2$ based on the combinatorial structure of the Stanley-Reisner complex of the ideal.

\begin{thm}
Let $I$ be generated by square-free monomials in the same degree.  Then $I$ has a linear resolution over fields of characteristic $2$ if and only if the Stanley-Reisner complex of $I$ is chorded.
\end{thm}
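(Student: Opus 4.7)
The plan is to pass from the algebraic linear-resolution condition to a homological condition on induced subcomplexes via Hochster's formula, and then to match that condition with the combinatorial definition of ``chorded.'' Writing $I = I_\Delta$ and applying Hochster's formula,
\[
\beta_{i,j}(S/I_\Delta) \;=\; \sum_{|W|=j} \dim_k \tilde{H}_{j-i-1}(\Delta_W; k),
\]
one sees that if $I$ is generated in degree $d$ then $I$ has a $d$-linear resolution over $k$ precisely when $\tilde{H}_\ell(\Delta_W;k) = 0$ for every $W \subseteq V(\Delta)$ and every $\ell \neq d-2$. The theorem therefore reduces to showing that this single-degree-homology pattern over $k = \mathbb{F}_2$ is detected exactly by the chorded property of $\Delta$.

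For the necessity direction (linear resolution implies chorded), I would argue contrapositively: assuming $\Delta$ fails to be chorded, produce an induced subcomplex $\Delta_W$ together with an $\ell$-cycle ($\ell \neq d-2$) that cannot be written as a boundary over $\mathbb{F}_2$. This should refine the argument from \cite{ConFar12}, where the strictly weaker $d$-chorded condition was already shown to follow from linear resolution over any field; the additional combinatorial features packaged into ``chorded'' are precisely those obstructions that survive in characteristic $2$ homology, and the proof should exhibit them explicitly as unfillable cycles in a carefully chosen $\Delta_W$.

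For the sufficiency direction, which I expect to be the main obstacle, the task is to show that the chorded hypothesis forces $\tilde{H}_\ell(\Delta_W;\mathbb{F}_2) = 0$ for all $W$ and all $\ell \neq d-2$. The natural approach is to take an $\ell$-cycle $z$ in $\Delta_W$ over $\mathbb{F}_2$ and, using the chord faces supplied by the hypothesis, produce a chain whose boundary is $z$. Characteristic $2$ is essential here: a cycle is simply a collection of $\ell$-faces in which every $(\ell-1)$-face occurs an even number of times, and one can hope to reduce $z$ to zero modulo boundaries by repeatedly exchanging sub-configurations with chord-filled ones, as provided by the chorded definition and the combinatorial interpretation of $\mathbb{F}_2$-homology from \cite{Con13}. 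The real difficulty will be orchestrating these local reductions into a uniform inductive procedure that works simultaneously for all $W$ and all $\ell$, while ensuring every chord face used actually lies in $\Delta_W$; managing this bookkeeping across dimensions is the genuine novelty beyond Fr\"oberg's two-dimensional argument.

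As a sanity check, and to recover the new proof of Fr\"oberg's theorem promised in the abstract, I would specialize to $d = 2$: in that case the chorded condition should collapse to chordality of the complement graph, and the sufficiency argument should reduce to the familiar triangulation of $1$-cycles by chords. Agreement with Fr\"oberg's theorem at $d = 2$ both calibrates the higher-dimensional definition of ``chorded'' and anchors the inductive construction in the general case.
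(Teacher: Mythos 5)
Your overall strategy---reduce to vanishing of $\tilde H_\ell(\Delta_W;k)$ for $\ell\neq d-2$ via Fr\"oberg's/Hochster's criterion, then match that homological condition with chordedness dimension by dimension---is exactly the paper's. But the proposal stops short of the two pieces of actual content, and you flag the harder one yourself as an unresolved ``difficulty,'' so there is a genuine gap rather than a complete argument. For sufficiency, the paper does \emph{not} run a direct reduction of an arbitrary $\ell$-cycle in $\Gamma_W$ by swapping in chord faces (the ``uniform inductive procedure across all $W$ and $\ell$'' you worry about). Instead it works one skeleton at a time: since $\Gamma^{[m]}$ is $m$-chorded, an existing result (every $m$-dimensional cycle in an $m$-chorded complex bounds inside its own vertex set in the $m$-closure, over $\Z_2$) gives $\tilde H_m(\Delta_m(\Gamma^{[m]})_W;k)=0$; then a separate lemma on iterated closures shows $\Gamma_W$ and $\Delta_m(\Gamma^{[m]})_W$ have the same $m$- and $(m+1)$-faces, so their $m$-th homologies agree. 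That transfer step is what makes the bookkeeping disappear, and it is absent from your plan; without it, your local-exchange procedure still has to be constructed and its termination and containment in $\Delta_W$ justified.

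For necessity, your contrapositive plan (from ``not chorded'' build an unfillable cycle in some $\Delta_W$) is the awkward direction to set up. The paper argues directly: take any face-minimal, non-$m$-complete $m$-dimensional cycle $\Omega$ in $\Gamma^{[m]}$; linear resolution forces $\tilde H_m(\Gamma_{V(\Omega)};\Z_2)=0$, so $\Omega$ supports an $m$-boundary, and a lemma from the authors' earlier work converts ``is the support of a boundary on $V(\Omega)$'' into ``has a chord set.'' That conversion lemma is the whole point of the direction and your proposal does not identify or prove it. Finally, your $d=2$ ``sanity check'' that chordedness collapses to chordality of the graph is itself a nontrivial theorem in the paper (it needs the result that in $\Delta_1(G)$ every $1$-complete higher-dimensional cycle sits inside a simplex, hence has a chord set), not a free consistency check.
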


In Section \ref{sec:Frob_proof} we give a new combinatorial proof of Theorem \ref{thm:Frob_thm} over fields of characteristic $2$.

\begin{ack}
The authors would like to thank Rashid Zaare-Nahandi whose comments inspired the writing of this paper, and MSRI for their hospitality during the preparation of this paper.
\end{ack}


\section{Background}
Let $k$ be a field and let $R = k[x_1,\ldots,x_n]$.  For any monomial ideal $I$ in $R$ there is a {\bf minimal graded free resolution} of $I$ of the form
{
\[
	0 \rightarrow \bigoplus_j R(-j)^{\beta_{m,j}(I)} \rightarrow \bigoplus_j R(-j)^{\beta_{m-1,j}(I)} \rightarrow \cdots \rightarrow \bigoplus_j R(-j)^{\beta_{0,j}(I)} \rightarrow I \rightarrow 0
\]
}
where $R(-j)$ denotes the free $R$-module obtained by shifting the degrees of $R$ by $j$ and $m \leq n$.  The numbers $\beta_{i,j}(I)$ are called the {\bf graded Betti numbers} of $I$.  We say that $I$ has a {\bf $d$-linear resolution over $k$} if $\beta_{i,j}(I)=0$ for all $j \neq i+d$.  It follows that $I$ is generated in degree $d$.

It is known that classifying monomial ideals with linear resolutions is equivalent to classifying Cohen-Macaulay monomial ideals and that it is sufficient to consider square-free monomials \cite{EagRein98,Fr85}.

 By studying square-free monomial ideals we are able to make use of techniques from Stanley-Reisner theory and facet ideal theory by associating our ideal to a combinatorial object.  Recall that an (abstract) {\bf simplicial complex} $\Gamma$ on the finite set of {\bf vertices} $V(\Gamma)$ is a collection of subsets of $V(\Gamma)$ called {\bf faces} or {\bf simplices} such that if $F \in \Gamma$ and $F' \subseteq F$ then $F' \in \Gamma$.  The faces of $\Gamma$ that are not strictly contained in any other face of $\Gamma$ are called {\bf facets} and we denote the facet set by $\facets(\Gamma)$.  If $\facets(\Gamma)=\{F_1,\ldots,F_k\}$ then we write
\[
    \Gamma = \langle F_1,\ldots,F_k \rangle.
\]

The {\bf dimension} of a face $F$ of $\Gamma$ is equal to $|F|-1$.  A face of $\Gamma$ of dimension $d$ is referred to as a {\bf $d$-face}.  The {\bf dimension} of the simplicial complex $\Gamma$, denoted by $\dim \Gamma$, is the maximum dimension of its facets.  The complex $\Gamma$ is {\bf pure} if these facets all share the same dimension.

The {\bf pure $d$-skeleton} of a simplicial complex $\Gamma$, written $\Gamma^{[d]}$, is the simplicial complex whose facets are the faces of $\Gamma$ of dimension $d$. The complex $\Gamma$ is said to be {\bf $d$-complete} if all possible $d$-faces are present in $\Gamma$. The {\bf $d$-complement} of $\Gamma$ is the complex $\overline{\Gamma}_d$ with
\[
    \facets(\overline{\Gamma}_d) = \{F \subset V(\Gamma) \ | \ |F|=d+1, F \notin \Gamma\}.
\]

The {\bf induced subcomplex} of $\Gamma$ on the vertex set $S\subseteq V(\Gamma)$, denoted $\Gamma_S$, is the simplicial complex whose faces are those faces of $\Gamma$ contained in $S$.

A pure $d$-dimensional simplicial complex is {\bf $d$-path-connected} when each pair of $d$-dimensional faces are joined by a sequence of $d$-dimensional faces whose intersections are $(d-1)$-faces.  The {\bf $d$-path-connected components} of a pure $d$-dimensional simplicial complex are the maximal subcomplexes which are $d$-path-connected.

The {\bf Stanley-Reisner complex} of the square-free monomial ideal $I$ in the polynomial ring $k[x_1,\ldots,x_n]$ is the simplicial complex on the vertices $x_1,\ldots,x_n$ whose faces are given by the monomials not belonging to $I$.  It is denoted $\N(I)$.  Conversely, the {\bf Stanley-Reisner ideal} of the simplicial complex $\Gamma$, denoted $\N(\Gamma)$, is the ideal generated by monomials $x_{i_1}x_{i_2}\cdots x_{i_k}$ such that $\{x_{i_1},x_{i_2},\ldots,x_{i_k}\}$ is not a face of $\Gamma$.  See Figure \ref{fig:SR_complex} for an example of this relationship.

\begin{figure}[h!]
{\centering
    \includegraphics[height=1.15in]{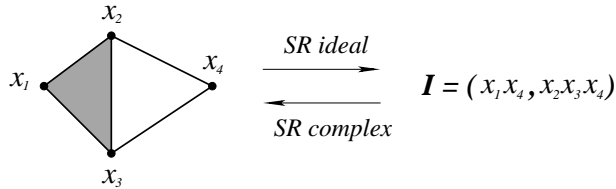}
    \caption {Stanley-Reisner relationship} \label{fig:SR_complex}

}
\end{figure}

The {\bf facet complex} of the square-free monomial ideal $I$ in $k[x_1,\ldots,x_n]$ is the simplicial complex $\F(I)$ on the vertices $x_1,\ldots,x_n$ whose facets are given by the minimal monomial generators of $I$.  The {\bf facet ideal} of the complex $\Gamma$ is generated by the monomials $x_{i_1}x_{i_2}\cdots x_{i_k}$ such that $\{x_{i_1},x_{i_2},\cdots ,x_{i_k}\}$ is a facet of $\Gamma$.  The facet ideal of $\Gamma$ is denoted by $\F(\Gamma)$.  An example is given in Figure \ref{fig:facet_complex}.

\begin{figure}[h!]
{\centering
    \includegraphics[height=1.5in]{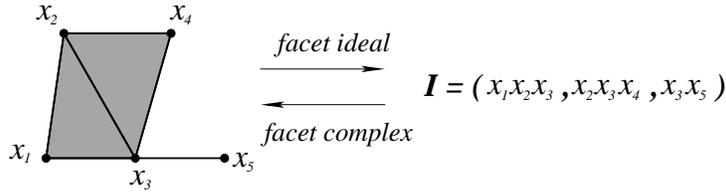}
    \caption {Facet ideal relationship} \label{fig:facet_complex}

}
\end{figure}

In \cite{Fr85} Fr\"oberg shows that a square-free monomial ideal has a linear resolution if and only if the simplicial homology of its Stanley-Reisner complex and of its induced subcomplexes vanish in all but one dimension.

\begin{thm}[Fr\"oberg \cite{Fr85}] \label{thm:Frob_hom_char}
A square-free monomial ideal $I$ has a $t$-linear resolution over a field $k$ if and only if $\tilde{H}_i((\N(I))_S;k) = 0$ for all $S \subseteq V(\N(I))$ and $i \neq t-2$.
\end{thm}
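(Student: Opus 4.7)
The plan is to apply Hochster's formula, which computes the graded Betti numbers of $R/I$ in terms of the reduced simplicial homology of induced subcomplexes of $\N(I)$:
\[
\beta_{i,j}(R/I) = \sum_{\substack{S \subseteq V(\N(I)) \\ |S| = j}} \dim_k \tilde{H}_{j - i - 1}((\N(I))_S; k).
\]
Since the minimal free resolution of $I$ differs from that of $R/I$ by a homological shift, $\beta_{i,j}(I) = \beta_{i+1,j}(R/I)$, and combining these yields
\[
\beta_{i,j}(I) = \sum_{\substack{S \subseteq V(\N(I)) \\ |S| = j}} \dim_k \tilde{H}_{j-i-2}((\N(I))_S; k).
\]

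For the ``if'' direction I would assume the reduced homology of every induced subcomplex of $\N(I)$ vanishes except possibly in degree $t-2$. Then for any pair $(i,j)$ with $j \neq i+t$ the index $j-i-2$ differs from $t-2$, so every summand on the right vanishes; thus $\beta_{i,j}(I) = 0$ whenever $j \neq i+t$, which is precisely the $t$-linear condition.

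For the ``only if'' direction, suppose $I$ has a $t$-linear resolution. Then the displayed sum is zero for every $(i,j)$ with $j \neq i+t$, and since dimensions are non-negative, each summand must vanish individually. Given $S \subseteq V(\N(I))$ and any $\ell \neq t-2$, I would set $j = |S|$ and $i = j - \ell - 2$; when $i \geq 0$ the condition $j \neq i+t$ is equivalent to $\ell \neq t-2$, so vanishing of the Hochster sum at this $(i,j)$ forces $\tilde{H}_\ell((\N(I))_S; k) = 0$. The remaining range $\ell \geq |S|-1$ is automatic: either $(\N(I))_S$ has no face of dimension $\ell$, or it is the full simplex on $S$ and hence contractible, so the reduced homology in that degree vanishes in either case.

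The substantive ingredient is Hochster's formula itself. Its standard derivation computes $\mathrm{Tor}^R_i(R/I, k)$ via the Koszul complex on the variables, exploits the induced $\mathbb{Z}^n$-multigrading to decompose the Tor groups, and identifies the strand in squarefree multidegree supported on $S$ (up to a shift in homological degree) with the reduced chain complex of $(\N(I))_S$ over $k$. Once this identification is available the theorem reduces to the index bookkeeping sketched above, so the main obstacle is packaging the Koszul/multigrading computation rather than the combinatorial translation.
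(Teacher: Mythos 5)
The paper does not prove this statement at all --- it is quoted as background from Fr\"oberg's 1985 paper --- so there is no internal proof to compare against. Your argument is the standard modern derivation via Hochster's formula, and it is correct: the identity $\beta_{i,j}(I)=\beta_{i+1,j}(R/I)$ is right, the resulting formula $\beta_{i,j}(I)=\sum_{|S|=j}\dim_k\tilde{H}_{j-i-2}((\N(I))_S;k)$ is right, and the index bookkeeping in both directions checks out, including your handling of the range $\ell\geq |S|-1$ where no homological degree $i\geq 0$ reaches that homology group (the only quibble is the degenerate convention case $S=\emptyset$, $\ell=-1$, where $\tilde{H}_{-1}(\{\emptyset\};k)=k$; the statement implicitly excludes this, as do most formulations). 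The one substantive dependency is Hochster's formula itself, which you correctly identify and treat as a black box with an accurate sketch of its Koszul/multigrading proof; for a result that the paper itself only cites, that is an appropriate level of detail. Note also that your route is not literally Fr\"oberg's original one (he argued via Golod rings and Poincar\'e series considerations), but it is the cleaner and now-standard path to this equivalence.
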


Therefore one way to discover which square-free monomial ideals have linear resolutions is to examine the simplicial homology of their Stanley-Reisner complexes.  In \cite{Con13} it is shown that non-vanishing $d$-dimensional simplicial homology in characteristic $2$ is equivalent to to the presence of a particular combinatorial structure in the simplicial complex called a {\bf $d$-dimensional cycle}.

\begin{definition} [{\bf $d$-dimensional cycle}]
For $d\geq 1$, a {\bf $d$-dimensional cycle} is a pure $d$-dimensional simplicial complex which is $d$-path-connected and has the property that each of its $(d-1)$-dimensional faces is contained in an even number of its $d$-dimensional faces.
\end{definition}

The concept of a $d$-dimensional cycle can be thought of as a generalization of the graph cycle to higher dimensions with similar homological behaviour. Recall that the {\bf support complex} of a homological $d$-chain $c=\alpha_1F_1+\cdots +\alpha_qF_q$, where each $\alpha_i$ is a non-zero element of the field $k$ under consideration, is the simplicial complex $\langle F_1,\ldots,F_q \rangle$.  The $d$-chain $c$ is a {\bf homological $d$-cycle} if it belongs to the kernel of the $d$-boundary operator so that $\partial_d(c)=0$.  It is considered a {\bf $d$-boundary} if $\partial_{d+1}(c')=c$ for some $(d+1)$-chain $c'$.  In this paper we will make use of the following close relationship between $d$-dimensional cyles and homological $d$-cycles over the field $\Z_2$.

\begin{prop} [Connon \cite{Con13}] \label{prop:ddimcycle_is_dcycle}
The sum of the $d$-faces of a $d$-dimensional cycle is a homological $d$-cycle over $\Z_2$ and, conversely, the $d$-path-connected components of the support complex of a homological $d$-cycle are $d$-dimensional cycles.
\end{prop}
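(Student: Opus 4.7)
The proof naturally splits into two directions, and both reduce to tracking $(d-1)$-faces along the boundary map.

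For the forward direction, let $\Delta$ be a $d$-dimensional cycle with $d$-faces $F_1, \ldots, F_q$ and set $c = F_1 + \cdots + F_q \in C_d(\Delta; \Z_2)$. I would compute $\partial_d(c) = \sum_i \partial_d(F_i)$ and observe that, because we are working over $\Z_2$, all sign ambiguities in the simplicial boundary disappear, so every $(d-1)$-subface $G$ of some $F_i$ contributes a $1$ to the coefficient of $G$ in $\partial_d(c)$. Collecting terms, the coefficient of each $(d-1)$-face $G$ in $\partial_d(c)$ equals (modulo $2$) the number of $d$-faces of $\Delta$ that contain $G$. By definition of a $d$-dimensional cycle this number is even, so $\partial_d(c) = 0$ and $c$ is a homological $d$-cycle over $\Z_2$.

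For the converse, let $c$ be a homological $d$-cycle over $\Z_2$. Since all nonzero coefficients are $1$, we may write $c = F_1 + \cdots + F_q$ and let $\Sigma = \langle F_1, \ldots, F_q \rangle$ be its support complex. Let $C$ be a $d$-path-connected component of $\Sigma$. By construction $C$ is pure $d$-dimensional and $d$-path-connected, so the only thing to verify is that every $(d-1)$-face of $C$ lies in an even number of $d$-faces of $C$. The key observation, which I would isolate as a short lemma-like remark, is that any two distinct $d$-faces $F_i, F_j$ of $\Sigma$ that share a common $(d-1)$-face $G$ are automatically joined by a length-one $d$-path and hence lie in the same $d$-path-connected component. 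Consequently, for any $(d-1)$-face $G$ of $C$, \emph{every} $d$-face of $\Sigma$ containing $G$ already belongs to $C$.

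With that observation in hand, the argument closes quickly: applying $\partial_d$ to $c$ as in the forward direction, the coefficient of $G$ in $\partial_d(c)$ is the parity of the number of $d$-faces of $\Sigma$ containing $G$. Since $\partial_d(c) = 0$, this parity is $0$, and by the previous paragraph this equals the parity of the number of $d$-faces of $C$ containing $G$. Hence $C$ satisfies the $d$-dimensional cycle condition. The only subtle point, which I view as the main (though mild) obstacle, is precisely the paragraph above: one must justify that counting $d$-faces containing $G$ in $C$ is the same as counting them in the whole support complex, so that the global cycle relation $\partial_d(c) = 0$ actually descends to each component separately.
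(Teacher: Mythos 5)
Your proof is correct, and it is the standard argument: both directions reduce to reading off the mod-$2$ coefficient of each $(d-1)$-face under $\partial_d$, and the one genuinely needed observation --- that two $d$-faces sharing a $(d-1)$-face lie in the same $d$-path-connected component, so the global relation $\partial_d(c)=0$ descends to each component --- is exactly the right one and you justify it. Note that this paper states the proposition as a quoted result from \cite{Con13} and gives no proof of its own, so there is nothing to compare against beyond confirming that your argument is the expected one; it is.
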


The following two propositions provide ways of building higher and lower-dimensional cycles from a $d$-dimensional cycle.

\begin{prop} [Connon \cite{Con13}] \label{prop:cone_cycle}
Let $\Omega$ be a $d$-dimensional cycle with $d$-faces $F_1,\ldots,F_k$ in a simplicial complex $\Gamma$.  Suppose that there exist $(d+1)$-faces $A_1,\ldots,A_\ell$ in $\Gamma_{V(\Omega)}$ such that, over $\Z_2$ we have
\begin{equation}\label{eq:boundaryofcycle}
    \partial_{d+1}\left(\sum_{i=1}^\ell A_i\right)=\sum_{j=1}^k F_j
\end{equation}
and for no strict subset of $\{A_1,\ldots,A_\ell\}$ does (\ref{eq:boundaryofcycle}) hold.  Let $v$ be a vertex with $v \notin V(\Omega)$ and let $\Phi =\langle F_1 \cup v,\ldots,F_k\cup v, A_1,\ldots,A_\ell \rangle$ then $\Phi$ is a $(d+1)$-dimensional cycle.
\end{prop}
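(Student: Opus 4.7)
The plan is to verify the three defining properties of a $(d+1)$-dimensional cycle for $\Phi$: purity, the even-containment property on $d$-faces, and $(d+1)$-path-connectedness. Purity is immediate because every facet of $\Phi$ is either $F_j \cup v$ (dimension $d+1$ since $v \notin V(\Omega)$) or $A_i$ (a $(d+1)$-face of $\Gamma$ by hypothesis). So the real work is in the other two properties.

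For the even-containment property, I would case-split on a $d$-face $G$ of $\Phi$ according to whether $v \in G$. If $v \in G$, write $G = F' \cup v$ for some $(d-1)$-face $F'$ of $\Omega$; then the $(d+1)$-faces of $\Phi$ containing $G$ are exactly the faces $F_j \cup v$ with $F' \subset F_j$, and there are evenly many such $F_j$ because $\Omega$ is a $d$-dimensional cycle. If $v \notin G$, then $G \subseteq V(\Omega)$, and the $(d+1)$-faces of $\Phi$ containing $G$ are the $A_i$'s containing $G$ together with $F_j \cup v$ in the one case where $G = F_j$. Reading off the coefficient of $G$ on each side of $\partial_{d+1}(\sum A_i) = \sum F_j$ over $\Z_2$ shows that the number of $A_i$'s containing $G$ has the opposite parity from the indicator of $\{G \in \{F_1,\ldots,F_k\}\}$, so the total count is even.

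The hard part is $(d+1)$-path-connectedness, and here the minimality hypothesis on $\{A_1,\ldots,A_\ell\}$ is essential. The subcomplex $\langle F_1 \cup v, \ldots, F_k \cup v\rangle$ is $(d+1)$-path-connected because any sequence of $d$-faces $F_{j_0}, F_{j_1}, \ldots$ in $\Omega$ meeting in $(d-1)$-faces lifts to a sequence $F_{j_0}\cup v, F_{j_1}\cup v, \ldots$ meeting in $d$-faces. So it suffices to show every $A_i$ is linked by a chain of $d$-face intersections to some $F_j \cup v$. Suppose toward contradiction that some $(d+1)$-path-connected component $C$ of $\Phi$ contains only $A_i$'s. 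No $d$-face of an $A_i \in C$ can equal some $F_j$ (else $A_i$ would share the $d$-face $F_j$ with $F_j \cup v$), and no $d$-face of an $A_i \in C$ can be a $d$-face of an $A_{i'} \notin C$ (by definition of the component). Therefore in the $\Z_2$ expansion
\[
\partial_{d+1}\Bigl(\sum_{A_i \in C} A_i\Bigr) + \partial_{d+1}\Bigl(\sum_{A_i \notin C} A_i\Bigr) = \sum_{j=1}^k F_j,
\]
the support of the first summand is disjoint from both $\{F_1,\ldots,F_k\}$ and from the support of the second summand, forcing $\partial_{d+1}\bigl(\sum_{A_i \in C} A_i\bigr) = 0$. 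Then $\{A_i : A_i \notin C\}$ is a strict subset of $\{A_1,\ldots,A_\ell\}$ whose boundary is still $\sum F_j$, contradicting the minimality assumption. This closes out the proof.
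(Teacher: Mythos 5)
The paper states this proposition without proof, citing \cite{Con13}, so there is no in-paper argument to compare against; your write-up has to stand on its own, and it does. The structure is the natural direct verification: purity is immediate, the even-containment property follows by splitting the $d$-faces of $\Phi$ according to whether they contain $v$ (using the cycle property of $\Omega$ in the first case and reading off coefficients in $\partial_{d+1}\bigl(\sum A_i\bigr)=\sum F_j$ in the second), and $(d+1)$-path-connectedness is exactly where the minimality of $\{A_1,\ldots,A_\ell\}$ gets consumed, via the observation that a component consisting only of $A_i$'s would have boundary supported away from both $\{F_1,\ldots,F_k\}$ and the remaining $A_i$'s, hence boundary zero, contradicting minimality. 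One wording slip to fix: in the case $v\notin G$, the coefficient comparison shows that $\#\{i : G\subset A_i\}$ has the \emph{same} parity as the indicator of $G\in\{F_1,\ldots,F_k\}$ (the two are congruent mod $2$), not the opposite parity; it is precisely this agreement that makes their sum, the total number of $(d+1)$-faces of $\Phi$ containing $G$, even. With that word corrected the argument is complete.
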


\begin{prop} [Connon \cite{Con13}] \label{prop:dcycle_contains_smallercycle}
Let $\Omega$ be a $d$-dimensional cycle and let $v \in V(\Omega)$.  If $F_1,\ldots,F_k$ are the $d$-faces of $\Omega$ which contain $v$ then the $(d-1)$-path-connected components of the complex $\langle F_1\setminus \{v\}, \ldots, F_k\setminus \{v\} \rangle$ are $(d-1)$-dimensional cycles.
\end{prop}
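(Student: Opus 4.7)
The plan is to verify, for each $(d-1)$-path-connected component $C$ of $\Psi:=\langle F_1\setminus\{v\},\ldots,F_k\setminus\{v\}\rangle$, the three defining properties of a $(d-1)$-dimensional cycle: purity, $(d-1)$-path-connectedness, and even incidence of $(d-2)$-faces in $(d-1)$-faces. The first two are essentially free: every generator $F_i\setminus\{v\}$ has dimension exactly $d-1$ (since each $F_i$ is a $d$-face of $\Omega$), so $\Psi$ and each component of it is pure of dimension $d-1$, and $(d-1)$-path-connectedness of $C$ holds by its very definition as a component.

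The essential work is the even-incidence condition, which I would handle by transporting the even-incidence property from $\Omega$ via the vertex $v$. Fix a $(d-2)$-face $\sigma$ of $C$; then $\sigma\subseteq F_i\setminus\{v\}$ for some facet of $C$, so $\sigma\cup\{v\}\subseteq F_i$ is a $(d-1)$-face of $\Omega$. Because $\Omega$ is a $d$-dimensional cycle, $\sigma\cup\{v\}$ lies in an even number of $d$-faces of $\Omega$, and each such $d$-face must contain $v$, hence belongs to $\{F_1,\ldots,F_k\}$. Consequently, the total number of indices $j$ with $\sigma\subseteq F_j\setminus\{v\}$ is even.

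What remains is to show that this even count coincides with the number of facets of $C$ (not merely of $\Psi$) that contain $\sigma$. Here I would invoke the one-line observation that any two facets of $\Psi$ sharing the $(d-2)$-face $\sigma$ are automatically $(d-1)$-path-connected through their intersection, so all such facets lie in the same component as $F_i\setminus\{v\}$, namely $C$. This is the one place where the argument could conceivably fail --- one might initially worry that the even parity inherited from $\Omega$ is diluted by some $F_j\setminus\{v\}$'s escaping to other components --- and I would treat it as the focal point of the argument, though its resolution is immediate from the definition of $(d-1)$-path-connectedness. (The proposition tacitly assumes $d\geq 2$, as the case $d=1$ would produce $0$-dimensional objects which are excluded by the paper's definition of a cycle.)
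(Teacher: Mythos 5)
The paper does not prove this proposition; it is imported from \cite{Con13} without proof, so there is no in-paper argument to compare against. Your proof is correct and self-contained: purity and path-connectedness are immediate, the parity of the number of $F_j\setminus\{v\}$ containing a $(d-2)$-face $\sigma$ transfers correctly from the even incidence of the $(d-1)$-face $\sigma\cup\{v\}$ in $\Omega$ (using that every $d$-face of $\Omega$ containing $\sigma\cup\{v\}$ contains $v$ and that $F\mapsto F\setminus\{v\}$ is injective on $\{F_1,\ldots,F_k\}$), and you correctly identify and close the one real gap --- that all facets of $\langle F_1\setminus\{v\},\ldots,F_k\setminus\{v\}\rangle$ containing $\sigma$ meet pairwise in the $(d-2)$-face $\sigma$ itself and hence lie in a single component, so the parity is not split across components. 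Your remark that the statement tacitly requires $d\geq 2$ is also consistent with how the proposition is used in the paper (always with $t-1\geq 1$).
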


A $d$-dimensional cycle is called {\bf face-minimal} if no strict subset of its $d$-dimensional faces also forms a $d$-dimensional cycle.  An example of a face-minimal $2$-dimensional cycle, the hollow tetrahedron, is given in Figure \ref{fig:hollow_tetra}.

\begin{figure}[h!]
{\centering
    \includegraphics[height=.8in]{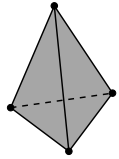}
    \caption {A $2$-complete face-minimal $2$-dimensional cycle} \label{fig:hollow_tetra}

}
\end{figure}

To generalize Fr\"oberg's criterion, we develop a higher-dimensional counterpart to chordal graphs.

\begin{definition} [{\bf chord set, $d$-chorded, chorded} \cite{ConFar12}]
Given a $d$-dimensional cycle $\Omega$ in a simplicial complex $\Gamma$ a {\bf chord set} of $\Omega$ in $\Gamma$ is a set $C$ of $d$-dimensional faces in $\Gamma$ not belonging to $\Omega$ which satisfy the following properties:
\begin{enumerate}
\item the simplicial complex whose set of facets is $C \cup \facets(\Omega)$ consists of $k$ $d$-dimensional cycles $\Omega_1,\ldots,\Omega_k$ for $k\geq 2$
\item each $d$-face in $C$ is contained in an even number of the cycles $\Omega_1,\ldots,\Omega_k$,
\item each $d$-face of $\Omega$ is contained in an odd number of the cycles $\Omega_1,\ldots,\Omega_k$,
\item $|V(\Omega_i)|<|V(\Omega)|$ for $i=1,\ldots,k$.
\end{enumerate}
A simplicial complex $\Gamma$ is {\bf $d$-chorded} if it is pure of dimension $d \geq 1$ and all face-minimal $d$-dimensional cycles in $\Gamma$ which are not $d$-complete have a chord set in $\Gamma$.  We say that an arbitrary simplicial complex $\Gamma$ is {\bf chorded} if $\Gamma^{[d]}$ is $d$-chorded for all $1 \leq d\leq \dim \Gamma$.
\end{definition}

As a consequence of the properties of a chord set all face-minimal $d$-dimensional cycles in a $d$-chorded complex can be broken down into cycles on fewer and fewer vertices until only $d$-complete cycles remain.  It is shown in \cite{Con13} that these are the $d$-dimensional cycles on the smallest number of vertices.  The notion of a $d$-chorded simplicial complex generalizes the graph theoretic notion of a chordal graph.  In particular a $1$-chorded complex is a chordal graph and conversely.  See Figure \ref{fig:2_chorded} for an example of a $2$-chorded simplicial complex.  This complex is comprised of a $2$-dimensional cycle, the hollow octahedron, with a chord set shown in a darker shading.

\begin{figure}[h!]
{\centering
    \includegraphics[height=1.15in]{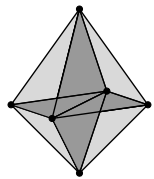}
    \caption {A $2$-chorded simplicial complex} \label{fig:2_chorded}

}
\end{figure}

The following lemma from \cite{ConFar12} demonstrates that a $d$-dimensional cycle which is the support complex of a $d$-boundary has a chord set.

\begin{lem} [Connon and Faridi \cite{ConFar12}] \label{lem:bdry_cycles_have_chord_sets}
Let $\Omega$ be a face-minimal $d$-dimensional cycle with vertex set $V$ that is not $d$-complete in a simplicial complex $\Gamma$.  If, over $\Z_2$, $\Omega$ is the support complex of a $d$-boundary of faces of $\Gamma_{V}$ then $\Omega$ has a chord set in $\Gamma$.
\end{lem}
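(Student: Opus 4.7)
The plan is to read off a chord set for $\Omega$ directly from a boundary expression that witnesses it as a support complex. By hypothesis there exist distinct $(d+1)$-faces $A_1,\ldots,A_\ell$ of $\Gamma_V$ with
\[
    \partial_{d+1}\!\left(\sum_{i=1}^\ell A_i\right) = \sum_{j=1}^k F_j \qquad \text{over } \Z_2,
\]
where $F_1,\ldots,F_k$ are the $d$-faces of $\Omega$. For each $i$, let $\Omega_i$ be the boundary of the simplex $A_i$, namely the $d$-complete complex on the $d+2$ vertices of $A_i$; each such $\Omega_i$ is a face-minimal $d$-dimensional cycle. I then propose the chord set
\[
    C = \left(\bigcup_{i=1}^\ell \facets(\Omega_i)\right) \setminus \facets(\Omega),
\]
which consists of $d$-faces of $\Gamma_V \subseteq \Gamma$ that do not belong to $\Omega$.

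The key observation is that, over $\Z_2$, the coefficient of a $d$-face $F$ in $\sum_i \partial A_i$ equals the number of indices $i$ for which $F \subset A_i$, i.e.\ the number of cycles $\Omega_i$ containing $F$, reduced modulo $2$. The displayed boundary equation therefore says exactly that a $d$-face lies in an odd number of the $\Omega_i$ if and only if it belongs to $\facets(\Omega)$. This is precisely conditions~(2) and~(3) of the chord set definition (for faces in $C$ and for faces in $\facets(\Omega)$, respectively). It also shows that every facet of $\Omega$ lies in at least one $\Omega_i$, so the facets of $\bigcup_i \Omega_i$ coincide with $C \cup \facets(\Omega)$, which handles condition~(1).

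To obtain the required "$\ell \geq 2$" in condition~(1) and to verify condition~(4), I use that $\Omega$ is not $d$-complete. A $d$-dimensional cycle on $d+2$ vertices is forced to be the $d$-complete boundary of a $(d+1)$-simplex, since removing any of its $d+2$ candidate facets would leave some $(d-1)$-face in only one $d$-face. Hence the not-$d$-complete hypothesis forces $|V(\Omega)| \geq d+3 > d+2 = |V(\Omega_i)|$, giving condition~(4). In particular, were $\ell = 1$ we would have $\Omega = \Omega_1$, a $d$-complete complex on $d+2$ vertices, contradicting the hypothesis; so $\ell \geq 2$.

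The verification is essentially a bookkeeping exercise translating the $\Z_2$-algebra of $\partial_{d+1}$ into the combinatorial conditions defining a chord set. The only mild subtlety is checking that boundaries of $(d+1)$-simplices genuinely deliver the required cycles $\Omega_i$ with strictly fewer vertices than $\Omega$, which rests on the elementary fact that the $d$-complete complex on $d+2$ vertices is the unique $d$-dimensional cycle of minimum vertex count.
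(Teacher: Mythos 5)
Your argument is correct: writing the witnessing $(d+1)$-chain as $A_1+\cdots+A_\ell$, taking the cycles $\Omega_i=\partial A_i$ (the $d$-complete complexes on $d+2$ vertices), reading off $C$ as the $d$-faces of the $A_i$ not in $\Omega$, and using that the only $d$-dimensional cycle on $d+2$ vertices is the $d$-complete one (so $|V(\Omega)|\geq d+3$ and $\ell\geq 2$) is precisely the standard route, and your mod-$2$ bookkeeping for properties 2 and 3 is sound. Note that the present paper only imports Lemma \ref{lem:bdry_cycles_have_chord_sets} from \cite{ConFar12} without reproducing its proof, and your construction matches the approach of that cited source.
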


Recall that the {\bf clique complex} of a graph $G$, denoted $\Delta(G)$, is the simplicial complex on the same vertex set as $G$ whose facets are given by the vertices in the maximal complete subgraphs of $G$.  In fact, Fr\"oberg originally gave Theorem \ref{thm:Frob_thm} in terms of the Stanely-Reisner ideal of the clique complex of a graph.  It is not hard to see that this ideal is equivalent to the edge ideal of a graph's complement.

\begin{thm}[Fr\"oberg \cite{Fr90}] \label{thm:Frob_original}
If a graph $G$ is chordal then the Stanley-Reisner ideal of $\Delta(G)$ has a $2$-linear resolution over any field.  Conversely, if the Stanley-Reisner ideal of a simplicial complex $\Gamma$ has a $2$-linear resolution over any field, then $\Gamma = \Delta(\Gamma^{[1]})$ and $\Gamma^{[1]}$ is chordal.
\end{thm}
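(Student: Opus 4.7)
The plan is to derive Theorem 2.6 over $\Z_2$ by invoking the paper's main Theorem 1.2 and translating between graph chordality and chordedness of the clique complex. Since the Stanley-Reisner complex of $\N(\Delta(G))$ is $\Delta(G)$ itself, Theorem 1.2 says that $\N(\Delta(G))$ has a $2$-linear resolution over $\Z_2$ if and only if $\Delta(G)$ is chorded. For the converse direction of Theorem 2.6, a $2$-linear resolution of $\N(\Gamma)$ forces the minimal monomial generators to have degree two; hence the minimal non-faces of $\Gamma$ are edges and $\Gamma = \Delta(\Gamma^{[1]})$. Both directions thus reduce to the combinatorial equivalence
\[
   G \text{ is chordal} \ \Longleftrightarrow \ \Delta(G) \text{ is chorded.}
\]

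The reverse implication is immediate: if $\Delta(G)$ is chorded then in particular $\Delta(G)^{[1]}$ is $1$-chorded, and $\Delta(G)^{[1]} = G$, which is precisely the graph-theoretic definition of chordal.

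For the forward implication, I would let $\Omega$ be a face-minimal $d$-dimensional cycle in $\Delta(G)$ that is not $d$-complete. Chordality is hereditary, so $G_{V(\Omega)}$ is chordal and admits a simplicial vertex $v$; its closed neighborhood $N[v]$ in $G_{V(\Omega)}$ is a clique, and hence the simplex on $N[v]$ is a face of $\Delta(G)$. The key claim is that, over $\Z_2$, the sum of the $d$-faces of $\Omega$ is a $d$-boundary of faces lying in $\Delta(G)_{V(\Omega)} = \Delta(G_{V(\Omega)})$; once this is established, Lemma \ref{lem:bdry_cycles_have_chord_sets} immediately produces the required chord set. To prove the claim I would induct on $|V(\Omega)|$: by Proposition \ref{prop:dcycle_contains_smallercycle} the $d$-faces of $\Omega$ through $v$ restrict (upon removing $v$) to a $(d-1)$-dimensional cycle $\Omega'$ whose vertices lie in the clique $N[v] \setminus \{v\}$. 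The cone-at-$v$ chain supported on $(d+1)$-faces of the simplex on $N[v]$ provides the $(d+1)$-faces handling the $d$-faces of $\Omega$ through $v$, and the induction hypothesis applied to the chordal subgraph $G_{V(\Omega) \setminus \{v\}}$ takes care of the residual $d$-faces of $\Omega$ not containing $v$.

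The main obstacle is the bookkeeping in this last step: the $\Z_2$-boundary of the cone chain contributes, besides the $d$-faces of $\Omega$ through $v$, extra $d$-faces lying entirely inside $N[v] \setminus \{v\}$, and these must cancel against the inductive chain. The critical fact is that $N[v] \setminus \{v\}$ is a clique of $G$, so the restriction of $\Delta(G)$ to it is a full simplex in which every $\Z_2$-homological cycle is a boundary; this absorbs the spurious $d$-faces and closes up the inductive construction of a boundary chain for $\Omega$, at which point Lemma \ref{lem:bdry_cycles_have_chord_sets} completes the forward direction.
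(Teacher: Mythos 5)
Your proposal is correct in substance, but it takes a genuinely different route from the paper's. (Note first that, like the paper's own Section \ref{sec:Frob_proof}, you only establish the characteristic-$2$ specialization of the statement, not the version over an arbitrary field; the paper cites the general statement from \cite{Fr90} and reproves only the $\Z_2$ case.) Both arguments reduce to the equivalence ``$G$ chordal $\Leftrightarrow$ $\Delta(G)$ chorded,'' and both handle the converse and the identification $\Gamma=\Delta(\Gamma^{[1]})$ the same way. The difference is in the forward direction. The paper (Lemma \ref{thm:delta_chorded}) invokes Theorem \ref{thm:skeletons_n_chorded}, which reduces the problem to $1$-complete face-minimal $m$-dimensional cycles, and then observes that a $1$-complete cycle in a clique complex spans a clique of $G$, so $\Delta_1(G)_{V(\Omega)}$ is a simplex and $\Omega$ trivially has a chord set by \cite[Remark 4.3]{ConFar12} --- a two-line argument staying entirely inside the paper's machinery. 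You instead attack an arbitrary face-minimal non-$d$-complete cycle directly: you import Dirac's lemma (every chordal graph has a simplicial vertex), run a cone-at-$v$ induction to show the sum of $d$-faces of $\Omega$ is a $\Z_2$-boundary in $\Delta(G)_{V(\Omega)}$, and then convert the boundary to a chord set via Lemma \ref{lem:bdry_cycles_have_chord_sets}. Your bookkeeping is right: over $\Z_2$ the link chain $w$ of $v$ in $\Omega$ is a $(d-1)$-cycle supported in the clique $N(v)$, hence bounds a chain $u$ in that simplex, and $\partial(v*u)$ absorbs $z_v$ while leaving the residual cycle $u+z_0$ on $V(\Omega)\setminus\{v\}$ for the induction (with the usual care at $d-1=0$, where the reduced/augmented complex is needed). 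What each approach buys: yours is self-contained modulo Dirac's lemma and is essentially a chain-level reworking of Fr\"oberg's original homological argument, so it is closer to the classical proof; the paper's is shorter and, more to the point of the paper, exhibits the chord sets combinatorially via Theorem \ref{thm:skeletons_n_chorded} rather than extracting them from a homology computation, which is precisely the ``intuitive explanation'' the authors say Fr\"oberg's Mayer--Vietoris proof lacks.
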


There exists a similar notion to the clique complex in higher dimensions.

\begin{definition} [{\bf $d$-closure}]
The {\bf $d$-closure} of a pure $d$-dimensional simplicial complex $\Gamma$, denoted $\Delta_d(\Gamma)$, is the simplicial complex on $V(\Gamma)$ whose faces are given in the following way:
\begin{itemize}
\item the $d$-faces of $\Delta_d(\Gamma)$ are exactly the $d$-faces of $\Gamma$
\item all subsets of $V(\Gamma)$ with at most $d$ elements are faces of $\Delta_d(\Gamma)$
\item a subset of $V(\Gamma)$ with more than $d+1$ elements is a face of $\Delta_d(\Gamma)$ if and only if all of its subsets of $d+1$ elements are faces of $\Gamma$.
\end{itemize}
\end{definition}

The $d$-closure of $\Gamma$ is also called the {\bf complex of $\Gamma$} \cite{Em10} and the {\bf clique complex of $\Gamma$} \cite{MYZ12}.  We use the term $d$-closure to keep track of the dimension at which the operation is applied.  See Figure \ref{fig:2closure} for an example of $2$-closure.

\begin{figure}[h]
\centering
\subfloat[$\Gamma = \langle abc, abd, acd, bcd, bce, cde \rangle$]{\makebox[6cm]{
            \includegraphics[height=1.3in]{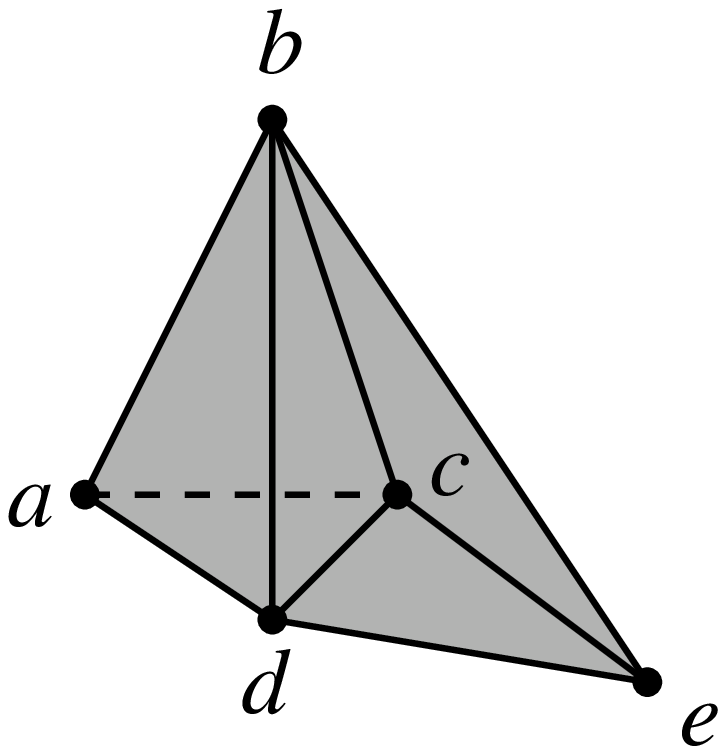}}
            \label{fig:2closure1}} \qquad \qquad
\subfloat[$\Delta_2(\Gamma) = \langle abcd, bce, cde, ae \rangle$]{\makebox[6cm]{
	\includegraphics[height=1.3in]{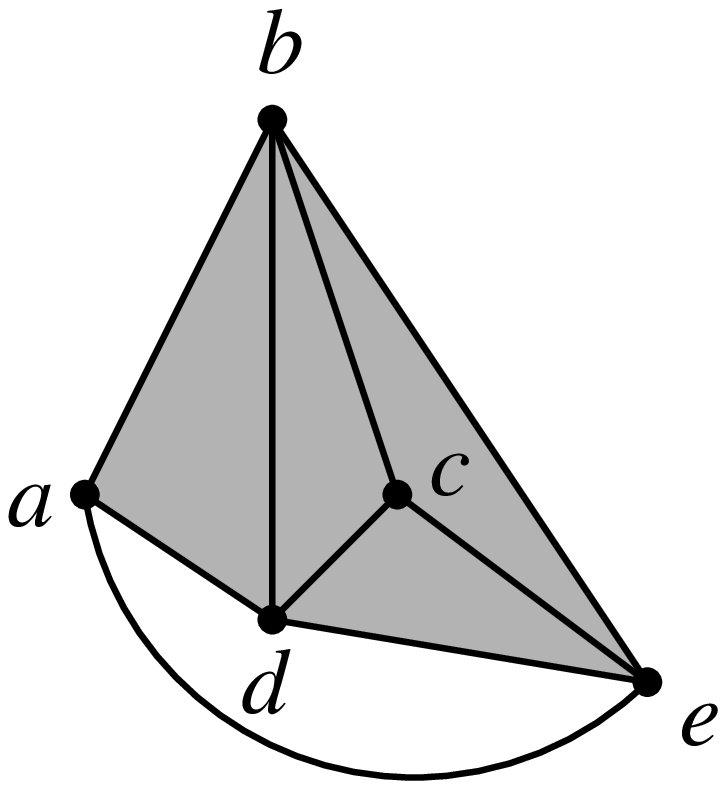}}
	\label{fig:2closure2}}
\caption{$2$-closure} \label{fig:2closure}
\end{figure}

The following lemma explains the results of subsequent applications of the closure operation on different dimensions.

\begin{lem} \label{lem:m_n_closure}
Let $\Gamma$ be a pure $n$-dimensional simplicial complex.
\begin{enumerate}
\item If $m<n$ then $\Delta_m(\Delta_n(\Gamma)^{[m]})$ is a simplex.
\item If $m=n$ then $\Delta_m(\Delta_n(\Gamma)^{[m]})=\Delta_n(\Gamma)$.
\item If $m>n$ then $\Delta_m(\Delta_n(\Gamma)^{[m]})^{[t]}= \Delta_n(\Gamma)^{[t]}$ for all $t \geq m$.
\end{enumerate}
\end{lem}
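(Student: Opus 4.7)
The plan is to verify all three parts by unpacking the definition of $d$-closure. For any pure $d$-dimensional complex $\Sigma$, the definition yields three items: (i) the $d$-faces of $\Delta_d(\Sigma)$ are exactly those of $\Sigma$; (ii) every subset of $V(\Sigma)$ of size at most $d$ is a face of $\Delta_d(\Sigma)$; and (iii) a subset of size greater than $d+1$ is a face of $\Delta_d(\Sigma)$ if and only if all of its $(d+1)$-subsets are $d$-faces of $\Sigma$. From here everything reduces to combinatorial bookkeeping about which subsets of $V(\Gamma)$ appear at each stage.

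For part (1), since $m < n$, item (ii) applied to $\Delta_n(\Gamma)$ guarantees that every $(m+1)$-subset of $V(\Gamma)$ is already a face there, so $\Delta_n(\Gamma)^{[m]}$ is the pure $m$-skeleton of the full simplex on $V(\Gamma)$. Items (ii) and (iii) then force $\Delta_m$ of this complex to contain every remaining subset of $V(\Gamma)$, so the result is a simplex. For part (2), item (i) gives that the $n$-faces of $\Delta_n(\Gamma)$ coincide with the $n$-faces of $\Gamma$; since $\Gamma$ is pure of dimension $n$, its facets are its $n$-faces, so $\Delta_n(\Gamma)^{[n]} = \Gamma$, and applying $\Delta_n$ once more recovers $\Delta_n(\Gamma)$.

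Part (3) is the substantive case. Fix $t \ge m > n$ and an arbitrary $(t+1)$-subset $T$ of $V(\Gamma)$. Applying item (iii) to $\Delta_m$, the set $T$ is a face of $\Delta_m(\Delta_n(\Gamma)^{[m]})$ if and only if every $(m+1)$-subset $S$ of $T$ is an $m$-face of $\Delta_n(\Gamma)^{[m]}$; applying item (iii) again to $\Delta_n$, this is equivalent to every $(n+1)$-subset of every such $S$ being an $n$-face of $\Gamma$. The chain of inequalities $n+1 \le m+1 \le t+1$ collapses these two conditions into a single one: every $(n+1)$-subset of $T$ can be extended to an $(m+1)$-subset of $T$, and every $(n+1)$-subset of an $(m+1)$-subset of $T$ sits inside $T$, so the criterion reduces to ``every $(n+1)$-subset of $T$ is an $n$-face of $\Gamma$''. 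A final appeal to item (iii) for $\Delta_n$ identifies this with the condition that $T$ be a $t$-face of $\Delta_n(\Gamma)$. Restricting to pure $t$-skeletons yields the claimed equality.

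The main (minor) obstacle is the extension-and-restriction bookkeeping in part (3), which rests precisely on the hypothesis $n < m \le t$; once that is set up, the rest is a routine application of the three items.
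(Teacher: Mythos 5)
Your proof is correct and follows essentially the same route as the paper's: parts (1) and (2) read off directly from the definition of $d$-closure, and part (3) is the same subset-checking argument, which you merely package as a single chain of equivalences on $(t+1)$-subsets rather than two separate containments. No gaps.
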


\begin{proof} \leavevmode
\begin{enumerate}
\item If $m<n$ then $\Delta_n(\Gamma)^{[m]}$ is $m$-complete as the $n$-closure adds all faces of dimension less than $n$.  Therefore by the definition of $m$-closure the set of all vertices of $\Delta_n(\Gamma)^{[m]}$ is a face of $\Delta_m(\Delta_n(\Gamma)^{[m]})$ and so $\Delta_m(\Delta_n(\Gamma)^{[m]})$ is a simplex.
\item If $m=n$ then by the nature of $n$-closure
           \[
                \Delta_m(\Delta_n(\Gamma)^{[m]}) = \Delta_n(\Delta_n(\Gamma)^{[n]}) = \Delta_n(\Gamma).
           \]
\item Let $m>n$ and let $F \in \Delta_n(\Gamma)^{[t]}$.  Then every subset $A$ of $F$ of size $m+1 \leq t+1$ is also a face of $\Delta_n(\Gamma)^{[t]}$ so $A \in \Delta_n(\Gamma)^{[m]}$.  Therefore $F \in \Delta_m(\Delta_n(\Gamma)^{[m]})^{[t]}$.

    Conversely, if $F \in \Delta_m(\Delta_n(\Gamma)^{[m]})^{[t]}$ then all subsets of $F$ of size $m+1 \leq t+1$ belong to $\Delta_n(\Gamma)^{[m]}$.  Therefore $\Delta_n(\Gamma)^{[m]}_F$ is $m$-complete.  Thus all subsets of $F$ of size $n+1 < m+1$ are in $\Delta_n(\Gamma)^{[m]}$ which means they are $n$-faces of $\Gamma$.  Hence by the definition of $n$-closure $F \in \Delta_n(\Gamma)^{[m]}$.
\end{enumerate}
\end{proof}

In the next proposition we see that when the minimal non-faces of a simplicial complexes are all of the same dimension $d$ then the complex is the $d$-closure of its pure $d$-skeleton.

\begin{prop} [Connon and Faridi \cite{ConFar12}] \label{prop:min_gen}
Let $\Gamma$ be a simplicial complex.  The ideal $\N(\Gamma)$ is minimally generated in degree $d+1$ if and only if $\Gamma = \Delta_d(\Gamma^{[d]})$.
\end{prop}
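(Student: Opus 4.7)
My plan is to translate both sides of the equivalence into a statement about the minimal non-faces of $\Gamma$, and then check the resulting combinatorial claim directly. Recall that the minimal monomial generators of $\N(\Gamma)$ are exactly the monomials $x_{i_1}\cdots x_{i_k}$ whose support $\{x_{i_1},\dots,x_{i_k}\}$ is a minimal non-face of $\Gamma$. Hence ``$\N(\Gamma)$ is minimally generated in degree $d+1$'' is equivalent to ``every minimal non-face of $\Gamma$ has exactly $d+1$ elements.'' The entire proof then reduces to showing that this condition is equivalent to $\Gamma=\Delta_d(\Gamma^{[d]})$.

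For the direction $(\Leftarrow)$ I would fix a minimal non-face $F$ of $\Gamma=\Delta_d(\Gamma^{[d]})$ and rule out $|F|\le d$ and $|F|>d+1$. If $|F|\le d$, the definition of $d$-closure immediately makes $F$ a face of $\Delta_d(\Gamma^{[d]})$, contradicting that it is a non-face. If $|F|>d+1$, every proper subset of $F$ is a face of $\Gamma$ by minimality, so in particular every $(d+1)$-subset of $F$ is a $d$-face of $\Gamma=\Gamma^{[d]}$, and again the definition of $d$-closure forces $F$ into $\Delta_d(\Gamma^{[d]})$, a contradiction. Hence $|F|=d+1$.

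For the direction $(\Rightarrow)$ I would prove the two inclusions $\Gamma\subseteq \Delta_d(\Gamma^{[d]})$ and $\Delta_d(\Gamma^{[d]})\subseteq \Gamma$ separately, splitting in each case according to whether $|F|\le d$, $|F|=d+1$, or $|F|>d+1$. The inclusion $\Gamma\subseteq \Delta_d(\Gamma^{[d]})$ is essentially definitional: small faces of $\Gamma$ satisfy the size bound built into $d$-closure, $d$-faces of $\Gamma$ are by definition $d$-faces of $\Gamma^{[d]}$, and larger faces of $\Gamma$ have all their $(d+1)$-subsets already in $\Gamma^{[d]}$. For the reverse inclusion, I would use the hypothesis that every minimal non-face of $\Gamma$ has size $d+1$: if $F\in \Delta_d(\Gamma^{[d]})$ were a non-face of $\Gamma$, it would contain a minimal non-face $G$ with $|G|=d+1$, and then $G$ would be one of the $(d+1)$-subsets of $F$ required by the definition of $d$-closure to be a $d$-face of $\Gamma$, giving the desired contradiction. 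The case $|F|\le d$ uses the same argument, noting that such an $F$ cannot contain a $(d+1)$-element subset at all.

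There is no real obstacle here; the proof is a careful unwinding of the definitions of $d$-skeleton, $d$-closure, and minimal non-face, together with a case split on $|F|$ relative to $d+1$. The only subtlety is remembering that the $d$-faces of $\Gamma^{[d]}$ coincide with the $d$-faces of $\Gamma$, so that the first and third clauses in the definition of $\Delta_d(\Gamma^{[d]})$ are readable directly in terms of the combinatorics of $\Gamma$ itself.
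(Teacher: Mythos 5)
Your proof is correct. Note that the paper itself gives no argument for this proposition --- it is quoted from \cite{ConFar12} without proof --- so there is nothing to compare against; your argument is the natural one, correctly reducing ``minimally generated in degree $d+1$'' to ``every minimal non-face has $d+1$ elements'' and then checking both inclusions against the three clauses of the definition of $d$-closure. The only points worth tidying are the case $|F|=d+1$ in the inclusion $\Delta_d(\Gamma^{[d]})\subseteq\Gamma$, where the contradiction comes from the first clause (the $d$-faces of $\Delta_d(\Gamma^{[d]})$ are exactly the $d$-faces of $\Gamma$) rather than from the third, and the degenerate situation where $\Gamma$ is a full simplex and $\N(\Gamma)=0$, which the statement implicitly excludes.
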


In \cite{ConFar12} we were able to show the following theorem which gives a necessary combinatorial condition for a Stanley-Reisner ideal to have a linear resolution over fields of characteristic $2$.  It is a generalization of one direction of Theorem \ref{thm:Frob_original} in the case of fields having characteristic $2$.

\begin{thm}[Connon and Faridi \cite{ConFar12}]\label{thm:one_dir}
Let $\Gamma$ be a simplicial complex, let $k$ be any field of characteristic $2$ and let $d \geq 1$.  If $\N(\Gamma)$ has a $(d+1)$-linear resolution over $k$ then $\Gamma = \Delta_d(\Gamma^{[d]})$ and $\Gamma^{[d]}$ is $d$-chorded.
\end{thm}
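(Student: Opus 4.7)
The plan is to prove the two conclusions separately, using the homological characterization of linear resolutions (Theorem \ref{thm:Frob_hom_char}) as the main bridge between the algebraic hypothesis and the combinatorial conclusion. The linear-resolution hypothesis gives us two pieces of data: the ideal $\N(\Gamma)$ is generated in a single degree $d+1$, and the reduced homology of every induced subcomplex of $\Gamma$ vanishes outside of dimension $d-1$.

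For the first conclusion, $\Gamma = \Delta_d(\Gamma^{[d]})$: since a $(d+1)$-linear resolution forces $\N(\Gamma)$ to be minimally generated in degree $d+1$, Proposition \ref{prop:min_gen} immediately gives $\Gamma = \Delta_d(\Gamma^{[d]})$. This step is purely formal.

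For the second conclusion, let $\Omega$ be a face-minimal $d$-dimensional cycle in $\Gamma^{[d]}$ that is not $d$-complete, and set $V = V(\Omega)$. By Proposition \ref{prop:ddimcycle_is_dcycle}, the chain $c = \sum_{F \in \facets(\Omega)} F$ is a homological $d$-cycle over $\Z_2$. Every $d$-face of $\Omega$ is a $d$-face of $\Gamma$, so $c$ lives naturally in the simplicial chain complex of $\Gamma_V$. Applying Theorem \ref{thm:Frob_hom_char} to our hypothesis gives $\tilde{H}_d(\Gamma_V;k)=0$, since $d \neq (d+1)-2 = d-1$. Therefore $c$ is a $d$-boundary of faces of $\Gamma_V$, meaning there exist $(d+1)$-faces $A_1,\ldots,A_\ell$ of $\Gamma_V$ with $\partial_{d+1}(A_1+\cdots+A_\ell)=c$ over $\Z_2$. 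Lemma \ref{lem:bdry_cycles_have_chord_sets} then produces a chord set for $\Omega$ in $\Gamma$, and because the chord set consists of $d$-faces of $\Gamma$ it automatically lies in $\Gamma^{[d]}$. Hence every such $\Omega$ has a chord set in $\Gamma^{[d]}$, proving that $\Gamma^{[d]}$ is $d$-chorded.

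The main conceptual step is the middle paragraph: recognizing that the linear-resolution hypothesis on $\N(\Gamma)$ translates, through Fr\"oberg's homological characterization, into the vanishing of $d$-th homology on every induced subcomplex, which is exactly what is needed to turn the combinatorial object $\Omega$ into the support complex of a $d$-boundary so that Lemma \ref{lem:bdry_cycles_have_chord_sets} applies. The only mild bookkeeping issues are viewing $c$ as a chain in $\Gamma_V$ rather than $\Gamma^{[d]}_V$ before invoking homology vanishing, and checking that the resulting chord set of $d$-faces in $\Gamma$ sits inside the pure $d$-skeleton $\Gamma^{[d]}$; both are immediate once unwound, but they are the glue between the algebraic hypothesis and the purely combinatorial conclusion.
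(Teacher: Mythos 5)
Your proof is correct and follows exactly the strategy this paper uses: Proposition \ref{prop:min_gen} for the statement $\Gamma=\Delta_d(\Gamma^{[d]})$, and the chain Proposition \ref{prop:ddimcycle_is_dcycle} $\Rightarrow$ Theorem \ref{thm:Frob_hom_char} $\Rightarrow$ Lemma \ref{lem:bdry_cycles_have_chord_sets} for $d$-chordedness, which is the same argument the authors run for the higher skeletons in the a)~$\Rightarrow$~b) step of Theorem \ref{thm:criterion} (the theorem itself is only cited from \cite{ConFar12} here, not reproved). The one point you elide is passing from the vanishing of $\tilde{H}_d(\Gamma_V;k)$ to exhibiting the cycle as a boundary over $\Z_2$; since $k$ is a (flat) field extension of $\Z_2$ the two vanishing statements are equivalent, so this is a standard remark rather than a gap.
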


The converse of Theorem \ref{thm:one_dir} does not hold.  The following is a counterexample.

\begin{example} \label{ex:chorded_no_LR}
Let $\Gamma$ be the pure $2$-dimensional simplicial complex on the vertex set $\{x_0,\ldots,x_5\}$ whose minimal non-faces are $\{x_0,x_1,x_2\}$ and $\{x_3,x_4,x_5\}$.  The complex $\Gamma$ is a $2$-chorded simplicial complex and the Stanley-Reisner ideal of the $3$-dimensional simplicial complex $\Delta_2(\Gamma)$ does not have a linear resolution over $\Z_2$.  The pure $3$-skeleton of $\Delta_2(\Gamma)$ is a $3$-dimensional cycle with no chord set which is not $3$-complete and we have $\tilde{H}_3(\Delta_2(\Gamma);\Z_2)\neq 0$.
\end{example}

In the next section we determine which $d$-chorded complexes have $d$-closures which do not have $(d+1)$-linear resolutions in characteristic $2$.  By doing this we give a necessary and sufficient combinatorial condition for an ideal to have a linear resolution over a field of characteristic $2$.


\section{A combinatorial criterion for linear resolution in characteristic $2$} \label{sec:criterion_char2}

As we can see from Theorem \ref{thm:Frob_hom_char} for a square-free monomial ideal to have a linear resolution its Stanley-Reisner complex must have vanishing simplicial homology in all but one dimension.  Theorem \ref{thm:one_dir} shows that in characteristic $2$ this corresponds to a pure complex that is $d$-chorded, where $d$ is the dimension of the complex.

Conversely, in order to show that a particular class of simplicial complexes have Stanley-Reisner ideals with linear resolutions we must show that the simplicial homology of these complexes vanishes in the right dimensions.  Consider any pure $d$-dimensional simplicial complex $\Gamma$.  We know that $\Delta_d(\Gamma)$ contains all possible faces of dimension less than $d$.  This means that $\Delta_d(\Gamma)$ cannot have any non-zero simplicial homology in dimensions less than $d-1$.  If we assume that $\Gamma$ is $d$-chorded then we may further show the following lemma which essentially says that a $d$-dimensional cycle is the support complex of $d$-boundary.

\begin{lem}[Connon and Faridi \cite{ConFar12}] \label{lem:dchorded_boundary}
The sum of the $d$-faces of a $d$-dimensional cycle $\Omega$ in a $d$-chorded simplicial complex $\Gamma$ forms a $d$-boundary on $V(\Omega)$ in $\Delta_d(\Gamma)$ over $\Z_2$.
\end{lem}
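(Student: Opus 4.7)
The plan is to use lexicographic strong induction on the pair $(|V(\Omega)|,|\facets(\Omega)|)$, showing that $c(\Omega):=\sum_{F\in\facets(\Omega)}F$ equals $\partial_{d+1}(\tau)$ for some $(d+1)$-chain $\tau$ supported on $\Delta_d(\Gamma)_{V(\Omega)}$ over $\Z_2$.

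For the base case $|V(\Omega)|=d+2$, I would first argue that $\Omega$ must be $d$-complete. Each $(d-1)$-subset $A\subset V(\Omega)$ is contained in only the two potential facets $A\cup\{v\}$ with $v\in V(\Omega)\setminus A$, so the even-degree hypothesis forces an all-or-nothing choice at each $A$, and $d$-path-connectedness then forces every $(d+1)$-subset of $V(\Omega)$ to be a facet. In particular every $(d+1)$-subset of $V(\Omega)$ is a $d$-face of $\Gamma$, so by the definition of $d$-closure the set $V(\Omega)$ itself is a face of $\Delta_d(\Gamma)$. Then $\partial_{d+1}(V(\Omega))=\sum_{v\in V(\Omega)}\bigl(V(\Omega)\setminus\{v\}\bigr)=c(\Omega)$ over $\Z_2$.

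For the inductive step, I split into two cases. If $\Omega$ is face-minimal with $|V(\Omega)|>d+2$, it cannot be $d$-complete (by the base-case argument, face-minimal $d$-complete cycles live on $d+2$ vertices), so the $d$-chorded hypothesis supplies a chord set $C$ producing cycles $\Omega_1,\ldots,\Omega_k$ with $|V(\Omega_i)|<|V(\Omega)|$. Over $\Z_2$, conditions (2) and (3) of the chord-set definition give $\sum_i c(\Omega_i)=c(\Omega)$: each $d$-face of $C$ appears an even number of times and cancels, while each $d$-face of $\Omega$ survives an odd number of times. Applying the inductive hypothesis to each $\Omega_i$ as a $d$-cycle in $\Gamma$ yields $(d+1)$-chains $\tau_i$ in $\Delta_d(\Gamma)_{V(\Omega_i)}\subseteq\Delta_d(\Gamma)_{V(\Omega)}$ with $\partial_{d+1}(\tau_i)=c(\Omega_i)$, and $\tau=\sum_i\tau_i$ finishes the case. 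If $\Omega$ is not face-minimal, I pick a $d$-cycle $\Omega'$ whose facet set is a strict subset of $\facets(\Omega)$; the chain $c(\Omega)+c(\Omega')$ is a homological $d$-cycle over $\Z_2$ with support $\langle\facets(\Omega)\setminus\facets(\Omega')\rangle$, whose $d$-path-connected components $\Omega''_1,\ldots,\Omega''_m$ are $d$-dimensional cycles in $\Gamma$ by Proposition \ref{prop:ddimcycle_is_dcycle}. Both $\Omega'$ and each $\Omega''_j$ have vertex set contained in $V(\Omega)$ with strictly fewer facets than $\Omega$, so the lexicographic hypothesis applies, realising $c(\Omega')$ and each $c(\Omega''_j)$ as $d$-boundaries in $\Delta_d(\Gamma)_{V(\Omega)}$, and summing gives $c(\Omega)=c(\Omega')+\sum_j c(\Omega''_j)$.

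The main obstacle I anticipate is the base-case analysis: verifying that a $d$-dimensional cycle on $d+2$ vertices is necessarily $d$-complete, and the related observation that face-minimal $d$-complete cycles occur only on $d+2$ vertices, so that Case A of the inductive step genuinely reduces to the chord-set scenario. Once this is pinned down, the rest of the argument is essentially $\Z_2$ chain arithmetic combined with the defining parity properties of a chord set and the monotonicity of the induced subcomplex functor $S\mapsto\Delta_d(\Gamma)_S$.
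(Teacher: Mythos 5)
This lemma is imported from \cite{ConFar12}, so the present paper contains no proof of it to compare against; judged on its own terms, your strategy --- lexicographic induction on $(|V(\Omega)|,|\facets(\Omega)|)$, with the base case $|V(\Omega)|=d+2$ bounding a $(d+1)$-simplex of the closure, the face-minimal case decomposed by a chord set, and the non-face-minimal case handled by splitting off a sub-cycle --- is the natural one and is essentially the decomposition argument the cited source relies on. Your base-case analysis is correct: on $d+2$ vertices each $(d-1)$-face lies in at most two $(d+1)$-subsets, so the parity condition plus $d$-path-connectedness forces $d$-completeness; $V(\Omega)$ is then a face of $\Delta_d(\Gamma)$ by the definition of the $d$-closure, and its boundary is $c(\Omega)$ over $\Z_2$. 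The observation that a face-minimal $d$-complete cycle must have exactly $d+2$ vertices (a larger $d$-complete cycle contains the $d$-complete cycle on any $d+2$ of its vertices as a proper sub-cycle) correctly reduces Case A to the chord-set situation, and the $\Z_2$ bookkeeping in both inductive cases is sound, as is the well-foundedness of the lexicographic order.

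The one step you still need to justify is the containment $V(\Omega_i)\subseteq V(\Omega)$ in the chord-set case. The lemma asserts not merely that $c(\Omega)$ is a boundary in $\Delta_d(\Gamma)$ but that it bounds a chain supported on $V(\Omega)$, and your argument obtains this by placing each $\tau_i$ inside $\Delta_d(\Gamma)_{V(\Omega_i)}\subseteq\Delta_d(\Gamma)_{V(\Omega)}$. However, the definition of a chord set as reproduced in this paper only requires $C$ to be a set of $d$-faces of $\Gamma$ not belonging to $\Omega$, together with the cardinality bound $|V(\Omega_i)|<|V(\Omega)|$; it does not literally assert $V(C)\subseteq V(\Omega)$, and without that the cycles $\Omega_i$ could a priori involve vertices outside $V(\Omega)$, breaking the localization (and hence the intended application in Theorem \ref{thm:skeletons_n_chorded}, where the bounding faces must avoid the vertex $v$). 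You should either invoke the precise definition from \cite{ConFar12}, in which chords are faces spanned by $V(\Omega)$ --- consistent with every use of chord sets in this paper, e.g.\ Lemma \ref{lem:bdry_cycles_have_chord_sets} produces chords of faces of $\Gamma_{V(\Omega)}$ --- or state and use that restriction explicitly. With that point pinned down the induction closes and the proof is complete.
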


The implication of Lemma \ref{lem:dchorded_boundary} is that when $\Gamma$ is $d$-chorded $\Delta_d(\Gamma)$ has vanishing homology in dimension $d$ and so altogether we have the following statement.

\begin{prop} [Connon and Faridi \cite{ConFar12}] \label{prop:chorded_dhom}
For any $d$-chorded simplicial complex $\Gamma$ and any field $k$ of characteristic $2$ we have $\tilde{H}_i(\Delta_d(\Gamma)_W;k)=0$ for all $W \subseteq V(\Gamma)$, $0 \leq i \leq d-2$ and $i=d$.
\end{prop}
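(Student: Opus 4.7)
The plan is to handle the low-dimensional vanishing and the top-dimensional vanishing separately, since each relies on a completely different feature of $\Delta_d(\Gamma)$: the first uses only that $d$-closure fills in every face of dimension less than $d$, while the second uses the chord-set machinery already packaged into Lemma \ref{lem:dchorded_boundary}.

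For $0 \le i \le d-2$, I would argue that $\Delta_d(\Gamma)_W$ agrees with the full simplex on $W$ in every dimension at most $d-1$. Indeed, by the definition of $d$-closure every subset of $V(\Gamma)$ of size at most $d$ is a face of $\Delta_d(\Gamma)$, and intersecting with $W$ leaves every subset of $W$ of size at most $d$ as a face of $\Delta_d(\Gamma)_W$. Hence the simplicial chain complex of $\Delta_d(\Gamma)_W$ coincides with that of the full simplex on $W$ in all dimensions $\le d-1$. Since reduced homology in degree $i$ only sees chains in degrees $i-1$, $i$, and $i+1$, all of which are at most $d-1$ when $i \le d-2$, the reduced homology in that range equals that of the simplex on $W$, which is trivial.

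For $i = d$, let $c$ be a reduced homological $d$-cycle in $\Delta_d(\Gamma)_W$ over a field $k$ of characteristic $2$. After reducing to $\Z_2$-coefficients via universal coefficients, I would write $c = F_1 + \cdots + F_q$ as a sum of distinct $d$-faces and apply Proposition \ref{prop:ddimcycle_is_dcycle} to decompose the support complex $\langle F_1,\ldots,F_q\rangle$ into $d$-path-connected components $\Omega_1,\ldots,\Omega_m$, each itself a $d$-dimensional cycle, with $c = \sum_{j=1}^m \sum_{F \in \Omega_j} F$. Each $\Omega_j$ sits inside $\Delta_d(\Gamma)_W$, but because the $d$-faces of $\Delta_d(\Gamma)$ are by definition precisely the $d$-faces of $\Gamma$, each $\Omega_j$ is also a $d$-dimensional cycle in the $d$-chorded complex $\Gamma$ itself. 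Lemma \ref{lem:dchorded_boundary} then furnishes a $(d+1)$-chain $b_j$ on $V(\Omega_j)$ in $\Delta_d(\Gamma)$ with $\partial_{d+1}(b_j) = \sum_{F \in \Omega_j} F$. Since $V(\Omega_j) \subseteq V(c) \subseteq W$, each $b_j$ in fact lies in $\Delta_d(\Gamma)_W$, and summing over $j$ exhibits $c$ as a $d$-boundary there.

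The one delicate point is verifying that the $d$-dimensional cycles $\Omega_j$ extracted from the support of $c$ are genuinely cycles in the ambient $\Gamma$ (so that Lemma \ref{lem:dchorded_boundary} applies to them), and that the $(d+1)$-chains it returns use only vertices already present in $W$. Both facts are immediate once one unwinds the definition of $d$-closure together with $V(\Omega_j) \subseteq V(c) \subseteq W$, but this is precisely the step where the $d$-chorded hypothesis is doing real work: without the vertex-set control provided by Lemma \ref{lem:dchorded_boundary}, the global boundary argument would not restrict to an arbitrary induced subcomplex.
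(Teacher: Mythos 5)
Your proposal is correct and follows the same route the paper indicates for this result: the vanishing for $0 \le i \le d-2$ comes from $\Delta_d(\Gamma)$ being $m$-complete for all $m<d$, and the vanishing for $i=d$ comes from decomposing a $\Z_2$-cycle into $d$-path-connected components via Proposition \ref{prop:ddimcycle_is_dcycle} and killing each with the vertex-controlled boundary from Lemma \ref{lem:dchorded_boundary}, which is exactly why the boundary restricts to $\Delta_d(\Gamma)_W$. No gaps; the reduction from $k$ to $\Z_2$ via universal coefficients and the observation that the $d$-faces of $\Delta_d(\Gamma)$ are those of $\Gamma$ are both handled appropriately.
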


As we can see from Example \ref{ex:chorded_no_LR} it is not necessarily the case that the upper-level homology groups of the $d$-closure of a $d$-chorded complex vanish.  In examples such as this the Stanley-Reisner ideal of the $d$-closure will not have a linear resolution.  In these cases the $d$-closure of the complex has a pure $m$-skeleton which is not $m$-chorded for some $m>d$.   When we require these $m$-skeletons to be $m$-chorded we obtain a necessary and sufficient condition for linear resolution over fields of characteristic $2$.

\begin{thm}[{\bf Criterion for a linear resolution I}] \label{thm:criterion}
Let $I$ be generated by square-free monomials of degree $d+1$.  The following are equivalent:
\begin{enumerate}[a)]
\item $I$ has a linear resolution over fields of characteristic $2$.
\item $\N(I)$ is chorded.
\item $\N(I)^{[m]}$ is $m$-chorded for all $m \geq d$.
\item $\Delta_d(\overline{\F(I)}_d)$ is chorded.
\item $\Delta_d(\overline{\F(I)}_d)^{[m]}$ is $m$-chorded for all $m\geq d$.
\end{enumerate}
\end{thm}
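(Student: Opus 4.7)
The plan is to reduce all five conditions to the chain (a) $\Leftrightarrow$ (c) $\Leftrightarrow$ (b), after observing that (b) $\Leftrightarrow$ (d) and (c) $\Leftrightarrow$ (e) via the identification $\N(I) = \Delta_d(\overline{\F(I)}_d)$. Indeed, because $I$ is minimally generated in degree $d+1$, Proposition \ref{prop:min_gen} gives $\N(I) = \Delta_d(\N(I)^{[d]})$, and a $(d+1)$-subset $\sigma$ of $V(\N(I))$ is a $d$-face of $\N(I)$ if and only if the corresponding squarefree monomial is not a minimal generator of $I$, that is, $\sigma$ is a facet of $\overline{\F(I)}_d$; hence $\N(I)^{[d]} = \overline{\F(I)}_d$. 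The equivalence (b) $\Leftrightarrow$ (c) is essentially free: $\N(I)$ contains every subset of $V(\N(I))$ of size at most $d$, so for $m<d$ the complex $\N(I)^{[m]}$ is the full $m$-skeleton of a simplex, in which every face-minimal $m$-cycle is already $m$-complete and chordedness is vacuous.

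For (a) $\Rightarrow$ (c), fix $m \geq d$ and let $\Omega$ be a face-minimal $m$-dimensional cycle in $\N(I)^{[m]}$ that is not $m$-complete. Viewing $\Omega$ inside $\N(I)_{V(\Omega)}$, Proposition \ref{prop:ddimcycle_is_dcycle} says the sum of its $m$-faces is a homological $m$-cycle over $\Z_2$. Since $m \geq d$, in particular $m \neq d-1$, so Theorem \ref{thm:Frob_hom_char} applied to hypothesis (a) forces $\tilde{H}_m(\N(I)_{V(\Omega)}; \Z_2) = 0$, and this cycle is the boundary of an $(m+1)$-chain of faces of $\N(I)_{V(\Omega)}$. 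Lemma \ref{lem:bdry_cycles_have_chord_sets} then supplies a chord set for $\Omega$ in $\N(I)$; its chord faces are $m$-dimensional faces of $\N(I)$ and hence lie in $\N(I)^{[m]}$, so this is also a chord set in $\N(I)^{[m]}$.

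For (c) $\Rightarrow$ (a), Theorem \ref{thm:Frob_hom_char} reduces the task to $\tilde{H}_i(\N(I)_S; \Z_2) = 0$ for every $S \subseteq V(\N(I))$ and every $i \neq d-1$. The range $0 \leq i \leq d-2$ is automatic because $\N(I)_S$ contains every subset of $S$ of size at most $d$, so its chain complex agrees in those dimensions with the acyclic chain complex of the full simplex on $S$. For $i \geq d$, let $c$ be a homological $i$-cycle in $\N(I)_S$; by Proposition \ref{prop:ddimcycle_is_dcycle} the $i$-path-connected components of its support are $i$-dimensional cycles $\Omega_1, \ldots, \Omega_r$ sitting inside $\N(I)^{[i]}$, which is $i$-chorded by hypothesis. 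Lemma \ref{lem:dchorded_boundary} writes the sum of $i$-faces of each $\Omega_j$ as $\partial_{i+1} c_j'$ for an $(i+1)$-chain $c_j'$ supported on $V(\Omega_j)$ in $\Delta_i(\N(I)^{[i]})$. Parts (2) and (3) of Lemma \ref{lem:m_n_closure} give $\Delta_i(\N(I)^{[i]})^{[i+1]} = \N(I)^{[i+1]}$ for every $i \geq d$, so each $c_j'$ already lives in $\N(I)_S$, and summing the $c_j'$ exhibits $c$ as a boundary.

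The main obstacle lies in this last step: Lemma \ref{lem:dchorded_boundary} only produces a bounding chain inside the possibly larger complex $\Delta_i(\N(I)^{[i]})$, whereas linear resolution demands a bounding chain inside $\N(I)_S$ itself. Lemma \ref{lem:m_n_closure} is precisely the bridge, telling us that these two complexes share the same $(i+1)$-faces whenever $i \geq d$, which is all that is needed to pull the bounding chains back into $\N(I)_S$.
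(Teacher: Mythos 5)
Your argument follows the paper's proof essentially step for step: the identification $\overline{\F(I)}_d = \N(I)^{[d]}$ gives (b)$\Leftrightarrow$(d) and (c)$\Leftrightarrow$(e); Theorem \ref{thm:Frob_hom_char} plus Proposition \ref{prop:ddimcycle_is_dcycle} and Lemma \ref{lem:bdry_cycles_have_chord_sets} give (a)$\Rightarrow$(c); and Lemma \ref{lem:dchorded_boundary} combined with Lemma \ref{lem:m_n_closure} gives (c)$\Rightarrow$(a), where you have simply unpacked what the paper cites as Proposition \ref{prop:chorded_dhom} into its ingredients. One inaccuracy worth fixing: in justifying the $m<d$ cases you claim that in the full $m$-skeleton of a simplex every face-minimal $m$-dimensional cycle is already $m$-complete, so that chordedness is vacuous. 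This is false --- a pentagon inside $K_5$ is a face-minimal $1$-dimensional cycle (no proper subset of its five edges is a cycle) that is not $1$-complete. The conclusion you need, namely that an $m$-complete complex is $m$-chorded, is nevertheless true because such cycles do admit chord sets; this is exactly Remark 4.3 of \cite{ConFar12}, which is how the paper disposes of the low-dimensional skeletons.
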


\begin{proof}
Let $\Gamma = \N(I)$ and let $\Upsilon = \overline{\F(I)}_d$.

 \bigskip

 \noindent a) $\Rightarrow$ b)  Suppose that $I$ has a linear resolution over any field of characteristic $2$.  By Theorem \ref{thm:one_dir} we know that  $\Gamma = \Delta_d(\Gamma^{[d]})$ and $\Gamma^{[d]}$ is $d$-chorded.  We also know that $\Gamma$ is $m$-complete for all $m<d$ by the definition of $d$-closure.  Therefore it follows from \cite[Remark 4.3]{ConFar12} that $\Gamma^{[m]}$ is $m$-chorded for $m< d$.

Let $m>d$ and let $\Omega$ be any face-minimal, non-$m$-complete $m$-dimensional cycle in $\Gamma^{[m]}$.  By Proposition \ref{prop:ddimcycle_is_dcycle} we know that $\Omega$ is the support complex of a homological $m$-cycle over $\Z_2$.  The ideal $I$ has a linear resolution over $\Z_2$ and so we know that $\tilde{H}_m(\Gamma_{V(\Omega)};\Z_2) =0$ by Theorem \ref{thm:Frob_hom_char}.  Thus $\Omega$ is also the support complex of an $m$-boundary of faces of $\Gamma_{V(\Omega)}$.  Therefore by Lemma \ref{lem:bdry_cycles_have_chord_sets} we know that $\Omega$ has a chord set in $\Gamma_{V(\Omega)}$.  Hence $\Gamma^{[m]}$ is $m$-chorded.  Therefore $\Gamma^{[m]}$ is $m$-chorded for all $1 \leq m \leq \dim \Gamma$ and so $\Gamma$ is chorded.

\bigskip

\noindent b) $\Rightarrow$ c) This is clear.

\bigskip

\noindent c) $\Rightarrow$ a) Suppose that $\Gamma^{[m]}$ is $m$-chorded for all $m \geq d$.  Since $I$ is generated by square-free monomials of degree $d+1$ then $\Gamma=\Delta_d(\Gamma^{[d]})$ by Proposition \ref{prop:min_gen}.  Therefore by Proposition \ref{prop:chorded_dhom} we know that for all $W \subseteq V(\Gamma)$ we have $\tilde{H}_i(\Gamma_W; k)=0$ for $0 \leq i \leq d-2$ and $i=d$.

Let $m > d$ and let $W \subseteq V(\Gamma)$.  We would like to show that $\tilde{H}_m(\Gamma_W; k)=0$.   By assumption $\Gamma^{[m]}$ is $m$-chorded.  Therefore by Proposition \ref{prop:chorded_dhom} we know that $\tilde{H}_m(\Delta_m(\Gamma^{[m]})_W;k)=0$.  Furthermore, by Lemma \ref{lem:m_n_closure} we have
\[
    \Delta_m(\Gamma^{[m]})^{[t]} = \Delta_m(\Delta_d(\Gamma^{[d]})^{[m]})=\Delta_m(\Gamma^{[m]})^{[t]}
\]
for all $t \geq m$.  Thus the $m$-faces and the $m+1$-faces of $\Delta_m(\Gamma^{[m]})_W$ and $\Gamma_W = \Delta_d(\Gamma^{[d]})_W$ are equivalent. Therefore we have
\[
    \tilde{H}_m(\Gamma_W; k)=\tilde{H}_m(\Delta_m(\Gamma^{[m]})_W;k)=0
\]
for all $m>d$.  Consequently $\tilde{H}_m(\Gamma_W; k)=0$ for all $m > d$.  Hence $I$ has a $(d+1)$-linear resolution by Theorem \ref{thm:Frob_hom_char}.

\bigskip

\noindent b) $\Leftrightarrow$ d)  It is easy to see that the $d$-complement of $\F(I)$ is equal to the pure $d$-skeleton of $\N(I)=\Gamma$.  Thus $\Upsilon = \Gamma^{[d]}$ and so $\Delta_d(\Upsilon)$ is chorded if and only $\Delta_d(\Gamma^{[d]})=\N(I)$ is chorded.

\bigskip

\noindent c) $\Leftrightarrow$ e) As before, $\Upsilon = \Gamma^{[d]}$ and so $\Delta_d(\Upsilon)^{[m]}$ is $m$-chorded for all $m \geq d$ if and only $\Delta_d(\Gamma^{[d]})^{[m]}$ is $m$-chorded for all $m \geq d$.
\end{proof}

The condition for $(d+1)$-linear resolution in Theorem \ref{thm:criterion} requires checking that every non-$m$-complete, face minimal $m$-dimensional cycle in $\N(I)^{[m]}$ has a chord set for all $m \geq d$ which can be tedious.  However our next result shows that in most cases assuming that $\N(I)^{[d]}$ is $d$-chorded suffices.  The only possible obstruction to this implication is the presence of an $m$-dimensional cycle of a very special form.  In general we expect these types of cycles to occur infrequently.  Thus to check for a linear resolution we need only verify that $\N(I)^{[d]}$ is $d$-chorded and that any cycles of this special nature have chord sets.

\begin{thm}\label{thm:skeletons_n_chorded}
Let $\Gamma$ be a $d$-chorded simplicial complex.  Suppose that for all $m >d$ each $1$-complete, face-minimal, non-$m$-complete $m$-dimensional cycle in $\Delta_d(\Gamma)$ has a chord set in $\Delta_d(\Gamma)$. Then $\Delta_{d}(\Gamma)$ is chorded.
\end{thm}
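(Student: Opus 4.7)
\emph{Proof plan.} Our goal is to show $\Delta_d(\Gamma)^{[m]}$ is $m$-chorded for every $m$ with $1 \le m \le \dim \Delta_d(\Gamma)$. The ranges $m<d$ and $m=d$ are immediate: for $m<d$ the definition of $d$-closure forces $\Delta_d(\Gamma)$ to contain every subset of size at most $d$, so $\Delta_d(\Gamma)^{[m]}$ is $m$-complete and hence $m$-chorded by Remark~4.3 of \cite{ConFar12}; for $m=d$ the $d$-faces of $\Delta_d(\Gamma)$ coincide with those of $\Gamma$ by construction, so $\Delta_d(\Gamma)^{[d]}=\Gamma$, which is $d$-chorded by hypothesis. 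The substantive case is $m>d$, and I plan to argue by induction on $m$.

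In the inductive step, fix a face-minimal, non-$m$-complete $m$-dimensional cycle $\Omega$ in $\Delta_d(\Gamma)^{[m]}$. If $\Omega$ is $1$-complete the hypothesis of the theorem supplies a chord set in $\Delta_d(\Gamma)$, so suppose $\Omega$ is not $1$-complete and pick $u,v\in V(\Omega)$ with $\{u,v\}\notin\Omega$. The idea is to exploit the missing pair via vertex-stripping together with an inner induction on $|V(\Omega)|$. Applying Proposition~\ref{prop:dcycle_contains_smallercycle} at $v$, the faces of $\Omega$ containing $v$ (with $v$ removed) decompose into $(m-1)$-path-connected components $D_1,\ldots,D_p$, each an $(m-1)$-dimensional cycle inside $\Delta_d(\Gamma)^{[m-1]}$. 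The outer inductive hypothesis gives that $\Delta_d(\Gamma)^{[m-1]}$ is $(m-1)$-chorded, so Lemma~\ref{lem:dchorded_boundary} combined with Lemma~\ref{lem:m_n_closure} realises each $\sum_{F\in D_i} F$ as the boundary of an $m$-chain $\sum_{A\in E_i} A$ whose support lies in $\Delta_d(\Gamma)_{V(D_i)}$. Proposition~\ref{prop:cone_cycle} then assembles each pair $(D_i,E_i)$ into an $m$-dimensional cycle
\[
 \Phi_i \;=\; \langle F\cup\{v\} : F \in D_i \rangle \cup \langle A : A \in E_i\rangle
\]
sitting inside $\Delta_d(\Gamma)$, with $V(\Phi_i) \subseteq V(D_i)\cup\{v\}\subseteq V(\Omega)$, and in fact strictly smaller than $V(\Omega)$ because $u$ never appears in an $m$-face of $\Omega$ through $v$ and hence may be missed.

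The candidate chord set of $\Omega$ is $C:=\bigl(\bigcup_i E_i\bigr)\setminus\Omega^{[m]}$, with the required cycle decomposition of $C\cup \Omega^{[m]}$ consisting of the $\Phi_i$'s together with the $m$-path-connected components of the leftover homological $m$-cycle $\sum_{v\notin F\in\Omega^{[m]}} F+\sum_{i}\sum_{A\in E_i} A$ (a cycle because the $\Phi_i$'s and $\Omega$ are all cycles). This leftover sits entirely on $V(\Omega)\setminus\{v\}$, so by Proposition~\ref{prop:ddimcycle_is_dcycle} each of its components is an $m$-dimensional cycle on strictly fewer vertices than $\Omega$, which is the hook for the inner induction on $|V(\Omega)|$. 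The main obstacle is the bookkeeping: verifying the parity conditions of the chord set (each $m$-face of $\Omega$ lies in exactly one of the $\Phi_i$'s or exactly one leftover component but not both, and each face of $C$ lies in one $\Phi_i$ and in exactly one leftover component), and handling any leftover components that are themselves neither $1$-complete nor supplied by the outer induction. The delicate endgame is managing $m$-complete leftover components, which do not automatically bound in $\Delta_d(\Gamma)$; the missing pair $\{u,v\}$ should force these to sit in a proper sub-simplex of $V(\Omega)$ so that the inner induction, the outer induction, and the theorem's hypothesis can be combined to close the argument.
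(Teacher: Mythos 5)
Your construction is the same as the paper's, step for step: handle $m<d$ by $m$-completeness, take $m=d$ as the base of an induction on $m$, dispose of the $1$-complete case by hypothesis, and otherwise pick $u,v$ in no common face, strip $v$ via Proposition~\ref{prop:dcycle_contains_smallercycle}, fill the resulting $(m-1)$-cycles using the inductive hypothesis together with Lemmas~\ref{lem:dchorded_boundary} and~\ref{lem:m_n_closure}, cone back over $v$ with Proposition~\ref{prop:cone_cycle}, and take as chord set the filling faces not already in $\Omega$, with the decomposition given by the coned cycles plus the path-components of the $\Z_2$ leftover. Up to that point the plan is sound, and you even record the two facts that make it work: the coned cycles miss $u$, and the leftover lives on $V(\Omega)\setminus\{v\}$ because each $F_j\ni v$ occurs once in $\Omega$ and once among the coned cycles (the $D_i$ being path-components, hence disjoint in those faces) and so cancels, while no face of any $E_i$ contains $v$.

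The problem is your final paragraph, which both defers the actual verification and invents work that does not exist. The definition of a chord set asks only for properties 1--4: a decomposition of $C\cup\facets(\Omega)$ into cycles, the even/odd incidence counts, and $|V(\Omega_i)|<|V(\Omega)|$ for each piece. It does \emph{not} ask the pieces to bound, to be $1$-complete, to have chord sets themselves, or to be further decomposable -- so there is no ``delicate endgame'' with $m$-complete leftover components, and no inner induction on $|V(\Omega)|$ to set up (you never state what that induction would prove or where its base case lies). Once you observe that every coned cycle omits $u$ and every leftover component omits $v$, property 4 holds for all pieces and the argument is finished. The remaining ``bookkeeping'' you flag as the main obstacle is genuinely routine and should be written out rather than waved at: properties 2 and 3 are read off face by face from the $\Z_2$ identity $\sum(\text{$m$-faces of }\Omega)=\sum_i\sum(\text{$m$-faces of }\Omega_i)$, and property 1 follows because each face of $\Omega$ through $v$ lies in a coned cycle, each face of $C$ lies in a coned cycle by construction, and each remaining face of $\Omega$ either survives into the leftover or, if it cancels there, must equal some filling face and hence already lies in a coned cycle. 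Filling in those three sentences, and deleting the inner induction, turns your plan into the paper's proof.
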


\begin{proof}
By the nature of the $d$-closure we know that $\Delta_d(\Gamma)$ is $t$-complete for all $t<d$.  Thus $\Delta_{d}(\Gamma)^{[t]}$ is $t$-chorded for all $t<d$ \cite[Remark 4.3]{ConFar12}.

For the remaining cases we will use induction on $t$.  When $t=d$ we have $\Delta_d(\Gamma)^{[d]}=\Gamma$.  Since $\Gamma$ is $d$-chorded by assumption this proves the base case.

Now suppose that $t>d$ and we know that $\Delta_d(\Gamma)^{[n]}$ is $n$-chorded for all $n<t$.  Let $\Omega$ be a face-minimal $t$-dimensional cycle that is not $t$-complete in $\Delta_d(\Gamma)^{[t]}$.  We would like to show that $\Omega$ has a chord set in $\Delta_d(\Gamma)^{[t]}$.  If $\Omega$ is $1$-complete then by assumption $\Omega$ has a chord set in $\Delta_d(\Gamma)^{[t]}$, and so we may assume that $\Omega$ is not $1$-complete.  Then there exist $u,v \in V(\Omega)$ such that $u$ and $v$ are not contained in the same $t$-face of $\Omega$.

Let $F_1,\ldots,F_k$ be the $t$-faces of $\Omega$ containing $v$. By Proposition \ref{prop:dcycle_contains_smallercycle} we know that the $(t-1)$-path-connected components of $\langle F_1\setminus \{v\},\ldots,F_k\setminus \{v\}\rangle$ are $(t-1)$-dimensional cycles.  Call these cycles $\Phi_1,\ldots,\Phi_m$.  For each $i \in \{1,\ldots,m\}$ let $P_i \subseteq \{1,\ldots,k\}$ be such that $F_j \setminus \{v\} \in \Phi_i$ if and only if $j \in P_i$.  Since for each $j$ the face $F_j \setminus \{v\}$ must belong to exactly one of $\Phi_1,\ldots,\Phi_m$, the sets $P_1,\ldots,P_m$ form a partition of $\{1,\ldots,k\}$.

The complex $\Delta_d(\Gamma)^{[t-1]}$ is $(t-1)$-chorded by assumption and so by Lemma \ref{lem:dchorded_boundary} the sum of the $(t-1)$-faces of $\Phi_i$ form a $(t-1)$-boundary in $\Delta_{t-1}(\Delta_d(\Gamma)^{[t-1]})$ on $V(\Phi_i)$ over $\Z_2$ for each $i$ and therefore in $\Delta_d(\Gamma)$ by Lemma \ref{lem:m_n_closure}.

Hence for each $i$ there exist $t$-faces $A^i_{1},...,A^i_{\ell_i}$ in $\Delta_d(\Gamma)_{V(\Phi_i)}$ such that
\begin{equation}\label{eq:boundary}
    \partial_t \left( \sum_{j=1}^{\ell_i} A^i_{j} \right) = \sum ((t-1)\textrm{-faces of }\Phi_i).
\end{equation}
Without loss of generality we may assume that the choice of $A^i_{1},...,A^i_{\ell_i}$ is minimal in the sense that for no strict subset of $A^i_{1},...,A^i_{\ell_i}$ is (\ref{eq:boundary}) satisfied.  Let $\Omega_i$ be the simplicial complex whose facets are $\{F_j | j \in P_i\} \cup \{A^i_{1},...,A^i_{\ell_i} \}$.

By Proposition \ref{prop:cone_cycle} we know that for $1 \leq i \leq m$ each $\Omega_i$ is a $t$-dimensional cycle and $V(\Omega_i)\subsetneq V(\Omega)$ as $u \notin V(\Omega_i)$.  Since $\Omega$ is a face-minimal $t$-dimensional cycle, each $\Omega_i$ must contain at least one $t$-face which is not in $\Omega$.  We collect all of these $t$-faces in the non-empty set $C$:
\[
    C = \{A_j^i \notin \Omega \ | \ 1 \leq i \leq m, \ 1 \leq j \leq \ell_i\}.
\]
We would like to show that $C$ is a chord set of $\Omega$ in $\Delta_d(\Gamma)^{[t]}$.

Consider the collection of $t$-faces in $\Omega$ and those in $\Omega_1,\ldots,\Omega_m$ with repeats.  Let $H_1,\ldots,H_s$ be the $t$-faces in this collection which appear an odd number of times so that over $\Z_2$ we have
\begin{equation} \label{eq:sum_faces}
    \sum_{i=1}^s H_i = \sum (t\textrm{-faces of } \Omega) + \sum_{i=1}^m\sum (t\textrm{-faces of } \Omega_i).
\end{equation}

Since $\Omega$ and $\Omega_1,\ldots,\Omega_m$ are all $t$-dimensional cycles, by Proposition \ref{prop:ddimcycle_is_dcycle} they correspond to homological $t$-cycles over $\Z_2$.  Therefore by (\ref{eq:sum_faces}) over $\Z_2$ we have,
\[
  \partial_t\left(\sum_{i=1}^s H_i\right)
   = \partial_t \left( \sum (t\textrm{-faces of } \Omega)\right) + \sum_{i=1}^m\partial_t\left(\sum (t\textrm{-faces of } \Omega_i) \right) = 0.
\]

Hence the $t$-path-connected components of the simplicial complex $\langle H_1,\ldots,H_s\rangle$ are $t$-dimensional cycles by Proposition \ref{prop:ddimcycle_is_dcycle}.  Call these cycles $\Omega_{m+1},\ldots,\Omega_M$.  We would like to show that our set $C$ is a chord set that breaks $\Omega$ into the cycles $\Omega_1,\ldots,\Omega_M$.  By (\ref{eq:sum_faces}), after rearranging the sums, over $\Z_2$ we have
\[
    \sum (t\textrm{-faces of }\Omega) = \sum_{i=1}^M \sum(t\textrm{-faces of }\Omega_i).
\]
By noticing that the set $C$ is exactly those $t$-faces on the right-hand side of this equation which do not belong to $\Omega$ we can see that properties 2 and 3 of a chord set hold for $C$.  Also, it is clear from our construction that all $t$-faces of both $\Omega$ and of $C$ appear in at least one of the $\Omega_i$'s.  Therefore property 1 of a chord set holds for the set $C$.

Now since none of $\Omega_1,\ldots,\Omega_m$ contain $u$ by construction we have $|V(\Omega_i)|<|V(\Omega)|$ for all $1 \leq i \leq m$.  We would like to show that none of $\Omega_{m+1},\ldots,\Omega_M$ contain $v$.  Recall that $\Phi_1,\ldots,\Phi_m$ are the $(t-1)$-path-connected components of $\langle F_1\setminus \{v\},\ldots,F_k\setminus \{v\} \rangle$ and so no two such distinct components could share a face of the form $F_i\setminus\{v\}$.  Thus each face $F_i$ appears in only one of the cycles $\Omega_1,\ldots,\Omega_m$.  Each such $F_i$ is also a face of $\Omega$ and so by our choice of $H_1,\ldots,H_s$ we know that we cannot have $F_i = H_j$ for any $i \in \{1,\ldots,k\}$ and $j \in \{1,\ldots,s\}$.  Therefore, by the construction of the cycles $\Omega_{m+1},\ldots,\Omega_M$ we know that none of the $F_i$'s appear in any of these cycles.  Recall that $F_1,\ldots,F_k$ are the only $t$-faces of $\Omega$ that contain $v$ and none of the $t$-faces of $C$ contain $v$ since they are subsets of $\bigcup_{i=1}^m V(\Phi_i)$.  It follows that none of $\Omega_{m+1},\ldots,\Omega_M$ contain $v$.  This implies that $|V(\Omega_i)|<|V(\Omega)|$ for all $m+1 \leq i \leq M$.  Thus property 4 of a chord set is also satisfied by $C$ and hence $\Delta_d(\Gamma)^{[t]}$ is $t$-chorded.  Hence $\Delta_d(\Gamma)$ is chorded.
\end{proof}

As a consequence of Theorems \ref{thm:criterion} and \ref{thm:skeletons_n_chorded} we have the following theorem.

\begin{thm}[{\bf Criterion for a linear resolution II}] \label{thm:criterion2}
Let $I$ be generated by square-free monomials of degree $d+1$.  Then $I$ has a linear resolution over any field of characteristic $2$ if and only if $\N(I)^{[d]}$ is $d$-chorded and for $m > d$ each $1$-complete, face-minimal, non-$m$-complete $m$-dimensional cycle in $\N(I)$ has a chord set in $\N(I)$.
\end{thm}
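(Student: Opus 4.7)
The statement is essentially a packaging result that combines Theorems \ref{thm:criterion} and \ref{thm:skeletons_n_chorded}, so my plan is to prove each direction by invoking the appropriate one.

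For the forward direction, I would assume that $I$ has a linear resolution over fields of characteristic $2$ and apply the equivalence a) $\Leftrightarrow$ c) of Theorem \ref{thm:criterion} to conclude that $\N(I)^{[m]}$ is $m$-chorded for all $m \geq d$. Taking $m = d$ immediately gives the first half of the conclusion. For the second half, I would fix $m > d$ and an $m$-dimensional cycle $\Omega$ in $\N(I)$ that is face-minimal, non-$m$-complete, and $1$-complete (the $1$-completeness hypothesis is in fact not even needed here). Since $\Omega$ lives in $\N(I)^{[m]}$ and this skeleton is $m$-chorded, $\Omega$ has a chord set in $\N(I)^{[m]}$ by definition of $m$-chordedness, and hence in $\N(I)$ itself.

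For the backward direction, I would assume $\N(I)^{[d]}$ is $d$-chorded and that the stated cycle condition holds, and set $\Gamma = \N(I)^{[d]}$. Because $I$ is generated in degree $d+1$, Proposition \ref{prop:min_gen} gives $\N(I) = \Delta_d(\N(I)^{[d]}) = \Delta_d(\Gamma)$. The hypotheses of Theorem \ref{thm:skeletons_n_chorded} are then exactly the hypotheses of my theorem: $\Gamma$ is $d$-chorded, and for every $m > d$ every $1$-complete, face-minimal, non-$m$-complete $m$-dimensional cycle in $\Delta_d(\Gamma) = \N(I)$ admits a chord set in $\N(I)$. Theorem \ref{thm:skeletons_n_chorded} then yields that $\Delta_d(\Gamma) = \N(I)$ is chorded, which is condition b) of Theorem \ref{thm:criterion}. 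Invoking b) $\Rightarrow$ a) of Theorem \ref{thm:criterion} gives that $I$ has a linear resolution over fields of characteristic $2$.

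There is no real obstacle in this argument once the two previous theorems are in hand; the proof is simply a matter of identifying the hypotheses of Theorem \ref{thm:skeletons_n_chorded} with the reduced hypotheses of my theorem and chaining the equivalences of Theorem \ref{thm:criterion}. The only point that requires a moment's care is ensuring that the ``$1$-complete'' restriction on which cycles we check is preserved: in the forward direction this restriction strengthens what we are asked to prove, so it is harmless, and in the backward direction Theorem \ref{thm:skeletons_n_chorded} is already formulated in terms of exactly the same restricted class, so no further cycles need to be examined.
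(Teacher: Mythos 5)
Your proof is correct and follows exactly the route the paper intends: Theorem \ref{thm:criterion2} is stated there as an immediate consequence of Theorems \ref{thm:criterion} and \ref{thm:skeletons_n_chorded}, and your argument simply fills in the details of that deduction (including the correct observation that the $1$-completeness restriction is harmless in the forward direction and built into Theorem \ref{thm:skeletons_n_chorded} in the backward direction).
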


From Theorems \ref{thm:one_dir} and \ref{thm:criterion2} we conclude that for any square-free monomial ideal $I$ generated in degree $d+1$, if $I$ has no linear resolution then either $\N(I)^{[d]}$ is not $d$-chorded or for some $m>d$ there exists a $1$-complete face-minimal non-$m$-complete $m$-dimensional cycle in $\N(I)$ which has no chord set.  Example \ref{ex:chorded_no_LR} gives an instance of a complex $\N(I)$ in the latter case.

In the next section we prove that in the $1$-dimensional case, such obstructions to linear resolution do not exist.  In particular if $\Gamma^{[1]}$ is $1$-chorded then in $\Delta_1(\Gamma^{[1]})$ all $1$-complete $m$-dimensional cycles lie in $m$-complete induced subcomplexes which are $m$-chorded and consequently such cycles have chord sets.  This leads us to a new, combinatorial proof of Theorem \ref{thm:Frob_thm} in characteristic $2$.


\section{A new proof of Fr\"oberg's Theorem in characteristic $2$} \label{sec:Frob_proof}
In the proof of Theorem \ref{thm:Frob_original} in \cite{Fr90} Fr\"oberg shows that the simplicial homology of the clique complex of a chordal graph vanishes on all levels greater than zero.  He does so by dismantling the graph at a complete subgraph and then applying the Mayer-Vietoris sequence on the resulting dismantled clique complex.  This is a very clean and elegant method for demonstrating that all upper-level homologies are zero.  However, this technique gives no intuitive sense as to why it should be the case that filling in complete subgraphs of a chordal graph produces a simplicial complex with no homology on higher levels.  A chordal graph may contain complete subgraphs on any number of vertices and so the clique complex may have faces of any dimension.  The question is why the addition of these higher-dimensional faces doesn't introduce any new homology.  The following theorem, together with Proposition \ref{prop:chorded_dhom}, answers this question, from a combinatorial point of view, in the case that the field of interest has characteristic $2$.

\begin{lem} \label{thm:delta_chorded}
A graph $G$ is chordal if and only if $\Delta_1(G)$ is chorded.
\end{lem}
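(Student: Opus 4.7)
The plan is to establish both directions by appealing to Theorem~\ref{thm:skeletons_n_chorded}, with the key geometric input being that $1$-completeness inside the clique complex $\Delta_1(G)$ forces the induced subcomplex to be a full simplex.

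For the ($\Leftarrow$) direction, suppose $\Delta_1(G)$ is chorded.  By definition this means $\Delta_1(G)^{[t]}$ is $t$-chorded for each $1 \le t \le \dim \Delta_1(G)$; in particular at $t=1$, and since the $1$-closure leaves the $1$-faces of $G$ untouched, we have $\Delta_1(G)^{[1]} = G$.  Hence $G$ is $1$-chorded, which is precisely chordality of $G$ in the usual graph-theoretic sense, as noted immediately after the definition of $d$-chorded in the excerpt.

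For the ($\Rightarrow$) direction, assume $G$ is chordal, i.e.\ $1$-chorded, and apply Theorem~\ref{thm:skeletons_n_chorded} with $d=1$ and $\Gamma = G$.  The remaining hypothesis to verify is that for each $m > 1$, every $1$-complete, face-minimal, non-$m$-complete $m$-dimensional cycle $\Omega$ in $\Delta_1(G)$ has a chord set in $\Delta_1(G)$.  The central observation is that if $\Omega$ is $1$-complete then $V(\Omega)$ must form a clique in $G$: every pair of vertices in $V(\Omega)$ is an edge of $\Omega \subseteq \Delta_1(G)^{[1]} = G$.  By the definition of $1$-closure, every subset of this clique is a face of $\Delta_1(G)$, so $\Delta_1(G)_{V(\Omega)}$ is the full simplex on $V(\Omega)$.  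A simplex is contractible, hence has vanishing reduced homology in every dimension, so by Proposition~\ref{prop:ddimcycle_is_dcycle} the homological $m$-cycle obtained by summing the $m$-faces of $\Omega$ lies in the image of $\partial_{m+1}$ inside $\Delta_1(G)_{V(\Omega)}$ over $\Z_2$; in other words, $\Omega$ is the support complex of an $m$-boundary of faces of $\Delta_1(G)_{V(\Omega)}$.  Lemma~\ref{lem:bdry_cycles_have_chord_sets} then produces a chord set for $\Omega$ in $\Delta_1(G)$, which is exactly the hypothesis needed to invoke Theorem~\ref{thm:skeletons_n_chorded} and conclude that $\Delta_1(G)$ is chorded.

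The step requiring the most care is the passage from $1$-completeness of $\Omega$ to the identification of $\Delta_1(G)_{V(\Omega)}$ as a full simplex, since this is where the defining property of the $1$-closure (cliques collapse to simplices) is used; once this is in hand, the rest is a direct application of the previously developed machinery (Propositions~\ref{prop:ddimcycle_is_dcycle}, Lemma~\ref{lem:bdry_cycles_have_chord_sets}, and Theorem~\ref{thm:skeletons_n_chorded}) and no further combinatorial construction is required.
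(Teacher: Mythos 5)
Your proof is correct and follows essentially the same route as the paper: both directions hinge on observing that a $1$-complete cycle $\Omega$ spans a clique of $G$, so $\Delta_1(G)_{V(\Omega)}$ is a full simplex, after which Theorem~\ref{thm:skeletons_n_chorded} finishes the argument. The only (harmless) difference is in how you extract a chord set for $\Omega$: you go through vanishing homology of the simplex plus Lemma~\ref{lem:bdry_cycles_have_chord_sets}, whereas the paper cites directly that complete skeletons are $m$-chorded.
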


\begin{proof}
Since $G$ is chordal it is $1$-chorded.  Let $m>1$ and let $\Omega$ be a $1$-complete, face-minimal $m$-dimensional cycle in $\Delta_1(G)$ that is not $m$-complete.  Then $\Delta_1(G)_{V(\Omega)}$ is a $(|V(\Omega)|-1)$-simplex and hence $\Delta_1(G)_{V(\Omega)}^{[m]}$ is $m$-chorded \cite[Remark 4.3]{ConFar12}.  Therefore $\Omega$ has a chord set in $\Delta_1(G)$ and by Theorem \ref{thm:skeletons_n_chorded}, $\Delta_1(G)$ is chorded.

Conversely, $\Delta_1(G)^{[1]}$ is $1$-chorded and $\Delta_1(G)^{[1]}=G$ so $G$ is a chordal graph.
\end{proof}

This gives us a new proof of Fr\"oberg's theorem over fields of characteristic $2$ using the notion of $d$-chorded complexes.

\begin{thm}
If $G$ is chordal then $\N(\Delta_1(G))$ has a $2$-linear resolution over any field of characteristic $2$.  Conversely, if $\N(\Gamma)$ has a $2$-linear resolution over a field of characteristic $2$ then $\Gamma = \Delta_1(\Gamma^{[1]})$ and $\Gamma^{[1]}$ is chordal.
\end{thm}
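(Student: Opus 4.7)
The plan is to deduce the statement from Lemma \ref{thm:delta_chorded} together with the criterion of Theorem \ref{thm:criterion}, specialized to $d=1$; no new combinatorial arguments should be needed, since the substantive work has already been carried out.

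For the forward direction, I would begin with $G$ chordal and invoke Lemma \ref{thm:delta_chorded} to conclude that $\Delta_1(G)$ is a chorded simplicial complex. Setting $I = \N(\Delta_1(G))$, the involutive relationship between Stanley--Reisner ideals and simplicial complexes gives $\N(I) = \Delta_1(G)$. The minimal non-faces of $\Delta_1(G)$ are exactly the non-edges of $G$, all of size $2$, so Proposition \ref{prop:min_gen} confirms that $I$ is minimally generated in degree $2$. Hence condition (b) of Theorem \ref{thm:criterion} is satisfied with $d = 1$, and the implication (b) $\Rightarrow$ (a) of that theorem yields that $I$ has a $2$-linear resolution over every field of characteristic $2$.

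For the converse, I would apply Theorem \ref{thm:one_dir} with $d = 1$ directly: if $\N(\Gamma)$ has a $2$-linear resolution over some field of characteristic $2$, then $\Gamma = \Delta_1(\Gamma^{[1]})$ and $\Gamma^{[1]}$ is $1$-chorded. The paper's remark that a $1$-chorded complex is precisely a chordal graph then identifies $\Gamma^{[1]}$ as chordal, completing the equivalence.

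There is no genuine obstacle at this stage; the theorem is essentially a corollary. The one point worth flagging is why Lemma \ref{thm:delta_chorded} suffices even in dimensions $m > 1$: whenever $\Omega$ is a $1$-complete $m$-dimensional cycle in $\Delta_1(G)$, the induced subcomplex $\Delta_1(G)_{V(\Omega)}$ is a full simplex, so the hypotheses of Theorem \ref{thm:skeletons_n_chorded} are automatically met and the missing higher-dimensional chord sets are supplied for free. This observation, already packaged inside Lemma \ref{thm:delta_chorded}, is what makes the characteristic-$2$ Fr\"oberg theorem fall out so cleanly from the $d$-chorded framework.
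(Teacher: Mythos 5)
Your proof is correct and takes essentially the same route as the paper: the forward direction rests on Lemma \ref{thm:delta_chorded} and the converse on Theorem \ref{thm:one_dir} plus the identification of $1$-chorded with chordal. The only difference is one of packaging --- you invoke the implication (b) $\Rightarrow$ (a) of Theorem \ref{thm:criterion} as a black box, whereas the paper unwinds that same argument explicitly via Proposition \ref{prop:chorded_dhom}, Lemma \ref{lem:m_n_closure} and Theorem \ref{thm:Frob_hom_char}.
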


\begin{proof}
Let $G$ be a chordal graph and let $k$ be a field of characteristic $2$.  To show that $\N(\Delta_1(G))$ has a 2-linear resolution over $k$ we need to show that $\tilde{H}_i(\Delta_1(G)_W; k)=0$ for all $i \geq 1$ and all $W \subseteq V(G)$.  Let $d \geq 1$ and let $W \subseteq V(G)$.  We know by Theorem \ref{thm:delta_chorded} that $\Delta_1(G)^{[d]}$ is $d$-chorded.  Therefore by Proposition \ref{prop:chorded_dhom} we know that $\tilde{H}_d(\Delta_d(\Delta_1(G)^{[d]})_W;k)=0$.  However it is easy to see that $\tilde{H}_d(\Delta_d(\Delta_1(G)^{[d]})_W;k)=\tilde{H}_d(\Delta_1(G)_W;k)$ since the $d$-faces and the $(d+1)$-faces of the two complexes are equivalent by Lemma \ref{lem:m_n_closure}.  Hence we have $\tilde{H}_d(\Delta_1(G)_W; k)=0$ for all $d \geq 1$.  Therefore $\N(\Delta_1(G))$ has a 2-linear resolution by Theorem \ref{thm:Frob_hom_char}.

The converse follows by Theorem \ref{thm:one_dir} and by the equivalence of the notions of chordal and 1-chorded.
\end{proof}

\bibliography{chorded_paper_bib}

\begin{thebibliography}{10}

\bibitem{Con13}
E.~Connon.
\newblock On $d$-dimensional cycles and the vanishing of simplicial homology.
\newblock Preprint, 2013.

\bibitem{ConFar12}
E.~Connon and S.~Faridi.
\newblock Chorded complexes and a necessary condition for a monomial ideal to
  have a linear resolution.
\newblock {\em Journal of Combinatorial Theory, Series A}, to appear.

\bibitem{EagRein98}
J.~Eagon and V.~Reiner.
\newblock Resolutions of {S}tanley-{R}eisner rings and {A}lexander duality.
\newblock {\em J. Pure Appl. Algebra}, 130(3):265--275, 1998.

\bibitem{Em10}
E.~Emtander.
\newblock A class of hypergraphs that generalizes chordal graphs.
\newblock {\em Mathematica Scandinavica}, 106(1):50--66, 2010.

\bibitem{Fr85}
R.~Fr\"oberg.
\newblock Rings with monomial relations having linear resolutions.
\newblock {\em J. Pure Appl. Algebra}, 38:235--241, 1985.

\bibitem{Fr90}
R.~Fr\"oberg.
\newblock On {S}tanley-{R}eisner rings.
\newblock In {\em Topics in Algebra, \emph{Part II}}, volume~26, pages 57--70,
  Warsaw, 1990. Banach Center Publ., PWN.

\bibitem{HaVT08}
H.~T. H\`a and A.~Van~Tuyl.
\newblock Monomial ideals, edge ideals of hypergraphs, and their graded {B}etti
  numbers.
\newblock {\em Journal of Algebraic Combinatorics}, 27(2):215--245, 2008.

\bibitem{MNYZ12}
M.~Morales, A.~Nasrollah~Nejad, A.~A. Yazdan~Pour, and R.~Zaare-Nahandi.
\newblock Monomial ideals with $3$-linear resolutions.
\newblock 2012.
\newblock arXiv:1207:1789v1.

\bibitem{MYZ12}
M.~Morales, A.~A. Yazdan~Pour, and R.~Zaare-Nahandi.
\newblock Regularity and free resolution of ideals which are minimal to
  $d$-linearity.
\newblock 2012.
\newblock arXiv:1207:1790v1.

\bibitem{Wood11}
R.~Woodroofe.
\newblock Chordal and sequentially {C}ohen-{M}acaulay clutters.
\newblock {\em Electron. J. Combin.}, 18(1), 2011.
\newblock Paper 208.

\end{thebibliography}

\end{document}